\numberwithin{equation}{section}
\newtheorem{thm}{Theorem}[section]
\newtheorem{lema}[thm]{Lemma}
\newtheorem{prop}[thm]{Proposition}
\newtheorem{cor}[thm]{Corollary}
\theoremstyle{remark}
\newtheorem{remark}[thm]{Remark}
\theoremstyle{definition}
\newtheorem{definition}[thm]{Definition}
\def \dist {\mathrm{dist}}
\def\supp{\mathop{\text{\normalfont supp}}}
\newcommand{\R}{{\mathbb R}}
\newcommand{\N}{{\mathbb N}}
\newcommand{\ve}{\varepsilon}
\def\cp{\operatorname {\text{cap}}}
\title[Compactness and dichotomy in nonlocal shape optimization]{Compactness and dichotomy in nonlocal shape optimization}
\author[E. Parini]{Enea Parini}
\author[A. Salort]{Ariel Salort}
\address[E. Parini]{Aix Marseille Univ, CNRS, Centrale Marseille, I2M, 39 Rue Frédéric Joliot Curie, 13453 Marseille CEDEX 13, France}
\email{enea.parini@univ-amu.fr} \urladdr{http://www.i2m.univ-amu.fr/perso/enea.parini}
\address[A. Salort]{Departamento de Matem\'atica, FCEN -- Universidad de Buenos Aires and IMAS -- CONICET, Buenos Aires, Argentina}
\email{asalort@dm.uba.ar}
\urladdr{http://mate.dm.uba.ar/~asalort}
\subjclass[2010]{}
\keywords{}
\thanks{}
\begin{document}

\begin{abstract}
We prove a general result about the behaviour of minimizing sequences for nonlocal shape functionals satisfying suitable structural assumptions. Typical examples include functions of the eigenvalues of the fractional Laplacian under homogeneous Dirichlet boundary conditions. Exploiting a nonlocal version of Lions' concentration-compactness principle, we prove that either an optimal shape exists, or there exists a minimizing sequence consisting of two ``pieces'' whose mutual distance tends to infinity. Our work is inspired by similar results obtained by Bucur in the local case.
\end{abstract}

\maketitle

\section{Introduction}

A significant task in Shape Optimization consists in proving existence of minimizing sets, in a suitable class, for shape functionals of the kind
\[ \Omega \mapsto J(\Omega) = F(\lambda_1(\Omega),...,\lambda_m(\Omega)),\]
where $m \in \N^*$, $\Omega \subset \R^N$, and $\lambda_1(\Omega), ...,\lambda_m(\Omega)$ are eigenvalues of some differential operator. In the case of the Laplacian under Dirichlet boundary conditions, and $J(\Omega)=\lambda_k(\Omega)$, existence of optimal shapes among all   measurable  sets with prescribed Lebesgue measure has been a challenging open problem for a long time. Apart from the simpler cases $k=1$ and $k=2$, where the Faber-Krahn inequality implies that the optimal shape is a ball (for $k=1$) or the disjoint union of two equal balls (for $k=2$), for the general case existence in the class of quasi-open sets has been proven only recently by Bucur in \cite{bucurARMA} and by Mazzoleni and Pratelli in \cite{mazzolenipratelli} independently. It is still an open problem to identify the optimal shapes for $k \geq 3$, although numerical simulations support some conjectures.

When the differential operator under consideration is the fractional Laplacian, defined as
\[ (-\Delta)^s u(x) :=  C_{s,N}  \lim_{\varepsilon \to 0} \int_{\R^N \setminus B_\varepsilon(x)} \frac{u(x)-u(y)}{|x-y|^{N+2s}}\,dy,\]
where $s \in (0,1)$ and $C_{s,N}$ is a normalization constant, the situation is quite different. While the ball minimizes again the first eigenvalue under a volume constraint, the problem
\begin{equation} \label{minlambda2} \min \{\lambda_2(\Omega)\,|\,\Omega \subset \R^N,\,|\Omega|=c \}, \end{equation}
where $c > 0$, and $|\Omega|$ is the Lebesgue measure of $\Omega$, does not have a solution. Indeed, it was proven by Brasco and the first author \cite{brascoparini} that, for every admissible set $\Omega$,
\[ \lambda_2 (\Omega) > \lambda_1 (\widetilde{B}),\]
where $\widetilde{B}$ is a ball of volume $\frac{c}{2}$, and that a minimizing sequence $\{\Omega_n\}_{n \in \N}$ such that $\lambda_2(\Omega_n) \to \lambda_1(\widetilde{B})$  is given by the union of two disjoint balls of volume $\frac{c}{2}$, such that their mutual distance tends to infinity. This means that, in the nonlocal case, a general existence result as in \cite{bucurARMA} or \cite{mazzolenipratelli} can not hold true. On the other hand, if one restricts the minimization to quasi-open sets which are contained in a fixed open set $D \subset \R^N$, a generalization of the   existence  result by Buttazzo and Dal Maso \cite{buttazzodalmaso} holds true, as shown by Fern\'{a}ndez Bonder, Ritorto and the second author in \cite{fernandezbonderritortosalort}.

Inspired by the results obtained in \cite{bucur} by Bucur, in this paper we prove that, in the case of the fractional Laplacian, for a minimizing sequence only two situations can occur: \emph{compactness}, which implies, under some assumptions, existence of an optimal shape; or \emph{dichotomy}, which means that the sequence essentially behaves as the union of two disconnected sets, whose mutual distance tends to infinity, as in Problem \eqref{minlambda2}. To prove the result, we make use of a nonlocal version of the celebrated concentration-compactness principle of Lions \cite{lions}. Although some generalizations of Lions' result to the fractional case are stated in the literature, the proofs contained therein do not seem completely satisfactory, and therefore we prefer to provide our own proof. Our first main result reads as follows.   

\begin{thm} \label{ccp} 
Let $\{u_n\}_{n \in \N}$ be a bounded sequence in $H^s(\R^N)$ with $\int_{\R^N} |u_n|^2 \to \lambda$ for $n \to +\infty$. Then there exists a subsequence $\{n_k\}_{k \in \N}$ such that one of the following three cases occur:
\begin{enumerate}
 \item[\emph{(i)}] Compactness: there exists $\{y_k\}_{k \in \N} \subset \R^N$ such that
 \[ 
 \forall\varepsilon >0,\,\exists R < +\infty \text{ s.t. }\int_{y_k + B_R} |u_{n_k}|^2 \geq \lambda -\varepsilon. 
 \]
 \item[\emph{(ii)}] Vanishing: 
 \[ 
 \lim_{k \to +\infty} \sup_{y \in \R^N} \int_{y + B_R} |u_{n_k}|^2 = 0 \qquad \forall R>0.
 \]
 \item[\emph{(iii)}] Dichotomy: there exists $\alpha \in (0, \lambda)$, such that for all $\varepsilon>0$, there exist $k_0 \in \N$, $\{v_k\}_{k \in \N}$, $\{w_k\}_{k \in \N} \subset H^s(\R^N)$ such that, for $k \geq k_0$:
 \[ \|u_{n_k}-v_k-w_k\|_{L^2(\R^N)} \leq \delta(\varepsilon) \to 0 \qquad \text{for }\varepsilon \to 0;\]
 \[ \bigg| \int_{\R^N} |v_k|^2 - \alpha \bigg| \leq \varepsilon,\qquad \bigg| \int_{\R^N} |w_k|^2 - (\lambda-\alpha) \bigg| \leq \varepsilon;\]
 \[ \dist(\supp v_k,\supp w_k) \to +\infty \qquad \text{for }k \to +\infty;\]
 \begin{equation} \label{liminfconccomp} [u_{n_k}]^2_{H^s(\R^N)}-[v_k]^2_{H^s(\R^N)}-[w_k]^2_{H^s(\R^N)} \geq -2\varepsilon. \end{equation}
\end{enumerate}
\end{thm}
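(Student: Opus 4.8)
The plan is to follow Lions' original concentration-compactness argument, adapting the key estimates to the Gagliardo seminorm $[\,\cdot\,]_{H^s}$ in place of the Dirichlet integral. Define the Lévy concentration functions
\[ Q_n(R) := \sup_{y \in \R^N} \int_{y+B_R} |u_n|^2\,dx, \qquad R > 0. \]
Each $Q_n$ is nondecreasing in $R$, and since $\int_{\R^N}|u_n|^2 \to \lambda$ we have $\lim_{R\to\infty} Q_n(R) \in [0,\lambda]$ for each $n$, with the limit $\le \lambda$ uniformly. By a diagonal argument (Helly's selection theorem applied to the uniformly bounded, monotone functions $Q_n$), pass to a subsequence, still denoted $u_n$, so that $Q_n(R) \to Q(R)$ pointwise for a nondecreasing function $Q \colon (0,\infty) \to [0,\lambda]$. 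Set $\alpha := \lim_{R\to\infty} Q(R) \in [0,\lambda]$. The three cases of the theorem correspond exactly to $\alpha = \lambda$ (compactness), $\alpha = 0$ (vanishing), and $\alpha \in (0,\lambda)$ (dichotomy).

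If $\alpha = 0$, then $Q_n(R) \to 0$ for every $R$ and we are immediately in the vanishing case (ii). If $\alpha = \lambda$, one argues in the standard way: for each $\varepsilon>0$ choose $R_\varepsilon$ with $Q(R_\varepsilon) > \lambda - \varepsilon$, then for $n$ large pick $y_n^\varepsilon$ realizing (up to $\varepsilon$) the supremum defining $Q_n(R_\varepsilon)$; a now-classical argument shows the centers can be chosen independently of $\varepsilon$ (letting $\varepsilon$ run through a sequence tending to $0$, one checks the balls $y_n^{\varepsilon_j}+B_{R_{\varepsilon_j}}$ must eventually overlap, since each carries mass $> \lambda - \varepsilon_j$ while the total mass is $\approx \lambda$, so the radii can be enlarged to a common center), giving (i). These two cases require no fractional-specific input beyond the $L^2$ bookkeeping.

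The heart of the matter — and the main obstacle — is the dichotomy case $\alpha \in (0,\lambda)$, specifically the seminorm splitting inequality \eqref{liminfconccomp}. Fix $\varepsilon > 0$ and choose $R$ large with $Q(R) > \alpha - \varepsilon$; then choose $R' \gg R$ (to be sent to infinity slowly relative to $n$) so that $Q_n(R') < \alpha + \varepsilon$ for $n$ large. Pick $y_n$ with $\int_{y_n+B_R}|u_n|^2 > \alpha - \varepsilon$. Introduce smooth cutoffs $\varphi, \psi$ with $\varphi \equiv 1$ on $B_R$, $\supp\varphi \subset B_{2R}$, and $\psi \equiv 0$ on $B_{R'/2}$, $\psi \equiv 1$ outside $B_{R'}$, all centered at $y_n$, and set $v_n := \varphi u_n$, $w_n := \psi u_n$. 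The $L^2$ mass statements and the separation of supports ($\dist(\supp v_n,\supp w_n) \ge R'/2 - 2R \to \infty$) follow from the choice of the annulus $B_{R'}\setminus B_{2R}$ carrying little mass. For the seminorm, the difficulty is that $[\,\cdot\,]_{H^s}^2$ is a double integral over $\R^N\times\R^N$ and is \emph{not} additive over disjoint supports: one must show
\[ [u_n]_{H^s}^2 - [v_n]_{H^s}^2 - [w_n]_{H^s}^2 \ge -C\,\omega(R,R',n), \]
where the error comes from (a) the interaction term $\iint \frac{(v_n(x)-v_n(y))(w_n(x)-w_n(y))}{|x-y|^{N+2s}}$, which is \emph{negative} — this is the crucial sign, giving the one-sided inequality for free once the cross term with the cutoff commutators is controlled — and (b) the commutator errors $[\varphi u_n]_{H^s} $ vs. $\varphi$-weighted pieces of $[u_n]_{H^s}$, estimated via the boundedness of $\nabla\varphi$, $\nabla\psi$ (of order $1/R$, $1/R'$) together with the smallness of $\|u_n\|_{L^2}^2$ on the transition annulus. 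The technical point specific to the nonlocal setting is that the tails of the kernel $|x-y|^{-N-2s}$ couple the two pieces at all scales, so one cannot simply discard the far-field interaction; instead one estimates $\iint_{|x-y|>R'/4} \frac{|u_n(x)||u_n(y)|}{|x-y|^{N+2s}}$ by Hölder and the uniform $L^2$ bound, which is $O(R'^{-2s})$ and hence negligible as $R' \to \infty$. Collecting all error terms into $\delta(\varepsilon)$ and choosing $R'$ appropriately (as a function of $\varepsilon$, with $n$ large depending on $R'$) yields the four assertions of case (iii).
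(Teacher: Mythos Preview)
Your overall strategy---reducing to Lions' concentration-function trichotomy and then handling the seminorm splitting in the dichotomy case via cutoff commutator estimates---matches the paper's approach. The paper defers cases (i), (ii) and the $L^2$ parts of (iii) entirely to \cite{lions} and concentrates on \eqref{liminfconccomp}, for which it proves the commutator estimate you call (b): $[\varphi_R u]_{H^s}^2$ differs from $\iint |\varphi_R(x)|^2 \frac{|u(x)-u(y)|^2}{|x-y|^{N+2s}}\,dx\,dy$ by $O(R^{-s})$, uniformly over bounded $u$, via the bound $\iint |u(y)|^2\frac{|\varphi_R(x)-\varphi_R(y)|^2}{|x-y|^{N+2s}}\,dx\,dy \le C R^{-2s}$ together with Cauchy--Schwarz on the cross term. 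It then concludes directly from $|\varphi_{R_1}(x)|^2 + |\psi_{R_k}(x)|^2 \leq 1$ (disjoint supports), and no interaction term ever appears.

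One point to fix: your item (a) asserts that the interaction $\iint \frac{(v_k(x)-v_k(y))(w_k(x)-w_k(y))}{|x-y|^{N+2s}}$ is \emph{negative} and that this sign gives the one-sided inequality for free. For disjointly supported $v_k, w_k$ this integral equals $-2\iint \frac{v_k(x)w_k(y)}{|x-y|^{N+2s}}$, which has no definite sign unless $u_{n_k}$ happens to be nonnegative; in the local case the analogous term is exactly zero, not negative, so no sign argument is available here. What actually salvages your route is the tail estimate you give afterwards, which shows this term is $O((R')^{-2s})$ and hence small---the sign is irrelevant. The paper's organization sidesteps this entirely: once the commutator bound (b) is in hand, summing the two weighted integrals and using $\varphi^2+\psi^2 \leq 1$ pointwise gives $[v_k]^2+[w_k]^2 \leq [u_{n_k}]^2 + 2\varepsilon$ directly, with no interaction term to control.
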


Then, we apply Theorem \ref{ccp} to the sequence of \emph{torsion functions} $w_{\Omega_n}$, where $\{\Omega_n\}_{n \in \N}$ is a minimizing sequence for the shape functional under consideration, which are defined as the weak solutions of the problems
\begin{equation} \label{eq.1}
\left\{\begin{array}{r c l l} (-\Delta)^s w_{\Omega_n} & = & 1 & \text{in }\Omega_n, \\ w_{\Omega_n} & = & 0 & \text{in }\R^N \setminus \Omega_n.\end{array}\right.
\end{equation}

In order to introduce our main result, we recall that a sequence $\{\Omega_n\}_{n\in\N}$ of $s$-quasi open sets of uniformly bounded Lebesgue measure is said to \emph{$\gamma$-converge} to the $s$-quasi open set $\Omega$ if the solutions $w_{\Omega_n}$ of \eqref{eq.1} strongly converge in $L^2(\R^N)$ to the solution $w_\Omega\in H^s_0(\Omega)$ of the problem
\[ \left\{\begin{array}{r c l l} (-\Delta)^s w_\Omega & = & 1 & \text{in }\Omega, \\ w_\Omega & = & 0 & \text{in }\R^N \setminus \Omega \end{array}\right.\]
(see Section \ref{sec.prel} for precise definitions of $s$-quasi open sets). Moreover, we say that a sequence $\{\Omega_n\}_{n\in\N}$ of $s$-quasi open sets of uniformly bounded Lebesgue measure \emph{weakly $\gamma$-converges} to the $s$-quasi open set $\Omega$ if the solutions $w_{\Omega_n}$ of \eqref{eq.1} converge weakly in $H^s(\R^N)$, and strongly in $L^2(\R^N)$, to a function $w \in H^s(\R^N)$ such that $\Omega = \{w > 0\}$. Finally, for a given $s$-quasi open set $\Omega\subset\R^N$  of finite measure, we denote by $R_\Omega$ the \emph{resolvent operator} of $(-\Delta)^s$, which is defined as the function $R_\Omega:L^2(\R^N)\to L^2(\R^N)$ such that $R_\Omega(f)=u$, where $u$ is the weak solution of
\[ \left\{\begin{array}{r c l l} (-\Delta)^s u & = & f & \text{in }\Omega, \\ u & = & 0 & \text{in }\R^N \setminus \Omega.\end{array}\right.\]

We can now state our second main result, whose proof follows the ideas of \cite{bucur}.

\begin{thm} \label{main}
Let $\{\Omega_n\}_{n \in \N}$ be a sequence of quasi-open sets of uniformly bounded measure. Then there exists a subsequence, still denoted by the same index, such that one of the following situations occurs:
\begin{enumerate}
 \item[(i)] Compactness: there exists a (possibly empty) quasi-open set $\Omega$, and a sequence $\{y_n\}_{n \in \N} \subset \R^N$, such that $y_n + \Omega_n$ weakly $\gamma$-converges to $\Omega$ as $n \to +\infty$.
 
 \item[(ii)] Dichotomy: there exists a sequence of subsets $\widetilde{\Omega}_n \subset \Omega_n$ such that \[ \|R_{\Omega_n}-R_{\widetilde{\Omega}_n}\|_{\mathcal{L}(L^2(\R^N))} \to 0, \quad  \quad \widetilde{\Omega}_n = \Omega_n^1 \cup \Omega_n^2,\] where $\text{dist}(\Omega_n^1,\Omega_n^2)\to +\infty$ and $\liminf_{n \to +\infty} |\Omega_n^i| > 0$ for $i=1,2$.
\end{enumerate}
\end{thm}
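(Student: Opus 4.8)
The plan is to apply Theorem~\ref{ccp} to the sequence of torsion functions $u_n:=w_{\Omega_n}$ defined by \eqref{eq.1} and to translate each of its three alternatives into a statement about $\Omega_n$. Testing \eqref{eq.1} with $u_n$ gives $\int_{\R^N}u_n=\kappa\,[u_n]_{H^s}^2$, where $\kappa:=C_{s,N}/2$; combined with the fractional Sobolev inequality and the uniform bound $|\Omega_n|\le C_0$, this shows that $\{u_n\}$ is bounded in $H^s(\R^N)$ and that $\|u_n\|_{L^\infty(\R^N)}\le M$ for some $M$ (standard regularity for $(-\Delta)^s u=\mathbf 1_{\Omega_n}$ with $|\Omega_n|$ bounded). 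Passing to a subsequence with $\int_{\R^N}|u_n|^2\to\lambda\ge0$, I would apply Theorem~\ref{ccp}. \emph{Vanishing} is ruled out: it would imply, by Lions' lemma in $H^s$, that $u_n\to0$ in $L^p(\R^N)$ for some $p>2$, and interpolating with the bound on $\|u_n\|_{L^1}=\kappa[u_n]_{H^s}^2$ would force $\lambda=0$, so $u_n\to0$ in $L^2(\R^N)$ and alternative (i) holds with $\Omega=\emptyset$. In the \emph{compactness} case I would set $\widehat u_k:=u_{n_k}(\,\cdot+y_k)$, which by translation invariance of $(-\Delta)^s$ equals $w_{\widehat\Omega_k}$ with $\widehat\Omega_k:=\Omega_{n_k}-y_k$; by boundedness $\widehat u_k\rightharpoonup w$ in $H^s(\R^N)$ along a further subsequence, and the tightness of case (i) of Theorem~\ref{ccp} together with $H^s(B_R)\hookrightarrow\hookrightarrow L^2(B_R)$ upgrades this to $\widehat u_k\to w$ in $L^2(\R^N)$, with $w\ge0$. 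Then $\Omega:=\{w>0\}$ is $s$-quasi-open, $\widehat\Omega_k$ weakly $\gamma$-converges to $\Omega$ by definition, and this is alternative (i) (with $y_n$ relabelled as $-y_n$).

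The heart of the matter is the \emph{dichotomy} case. For fixed $\varepsilon>0$, let $v_k,w_k$ be as in Theorem~\ref{ccp}; inspecting its proof, $v_k$ and $w_k$ are obtained by multiplying $u_{n_k}$ by bounded Lipschitz cut-offs and passing to the limit, so $v_k,w_k\in H^s_0(\Omega_{n_k})$ and they vanish q.e. outside $\Omega_{n_k}$. With $d_k:=\dist(\supp v_k,\supp w_k)\to+\infty$, I would set
\[ \Omega_k^1:=\Omega_{n_k}\cap\{x:\dist(x,\supp v_k)<d_k/3\},\qquad \Omega_k^2:=\Omega_{n_k}\cap\{x:\dist(x,\supp w_k)<d_k/3\}, \]
which are $s$-quasi-open and disjoint with $\dist(\Omega_k^1,\Omega_k^2)\ge d_k/3\to+\infty$. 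Since $v_k$ vanishes q.e. outside $\Omega_{n_k}$ and vanishes outside $\supp v_k$, the characterization of $H^s_0$ of a quasi-open set gives $v_k\in H^s_0(\Omega_k^1)$, and likewise $w_k\in H^s_0(\Omega_k^2)$; hence $v_k+w_k\in H^s_0(\widetilde\Omega_k)$ for $\widetilde\Omega_k:=\Omega_k^1\cup\Omega_k^2\subset\Omega_{n_k}$. The measure lower bounds come from $\{v_k>0\}\subset\Omega_k^1$ and $\|u_{n_k}\|_{L^\infty}\le M$: from $\alpha-\varepsilon\le\int_{\R^N}|v_k|^2\le M^2|\Omega_k^1|$ one gets $|\Omega_k^1|\ge(\alpha-\varepsilon)/M^2$, and similarly $|\Omega_k^2|\ge(\lambda-\alpha-\varepsilon)/M^2$.

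Next I would compare torsional energies. Write $\mathcal E(\Omega):=\int_{\R^N}w_\Omega=\kappa[w_\Omega]_{H^s}^2=\max\{\,2\int_{\R^N}u-\kappa[u]_{H^s}^2:u\in H^s_0(\Omega)\,\}$. Using $v_k+w_k$ as a competitor for $\mathcal E(\widetilde\Omega_k)$ and observing that $[v_k+w_k]_{H^s}^2\le[v_k]_{H^s}^2+[w_k]_{H^s}^2$ (as $v_k,w_k\ge0$ have disjoint supports, so the cross term is $\le0$), one obtains $\mathcal E(\widetilde\Omega_k)\ge 2\int_{\R^N}(v_k+w_k)-\kappa[v_k]_{H^s}^2-\kappa[w_k]_{H^s}^2$. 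By \eqref{liminfconccomp}, $\kappa[v_k]_{H^s}^2+\kappa[w_k]_{H^s}^2\le\mathcal E(\Omega_{n_k})+2\kappa\varepsilon$; and since $v_k+w_k-u_{n_k}$ is supported in $\Omega_{n_k}$ with $|\Omega_{n_k}|\le C_0$, the bound $\|v_k+w_k-u_{n_k}\|_{L^2}\le\delta(\varepsilon)$ gives $\int_{\R^N}(v_k+w_k)\ge\mathcal E(\Omega_{n_k})-C_0^{1/2}\delta(\varepsilon)$. Combining these with the monotonicity $\mathcal E(\widetilde\Omega_k)\le\mathcal E(\Omega_{n_k})$ and the pointwise inequality $w_{\widetilde\Omega_k}\le w_{\Omega_{n_k}}$,
\[ 0\le\|w_{\Omega_{n_k}}-w_{\widetilde\Omega_k}\|_{L^1(\R^N)}=\mathcal E(\Omega_{n_k})-\mathcal E(\widetilde\Omega_k)\le 2C_0^{1/2}\delta(\varepsilon)+2\kappa\varepsilon\qquad(k\ge k_0(\varepsilon)). \]
A diagonal argument over a sequence $\varepsilon\to0$ (choosing $k_0(\varepsilon)$ large enough that $d_k/3$ exceeds any prescribed level and the measure bounds stay above $\tfrac{\alpha}{2M^2}$ and $\tfrac{\lambda-\alpha}{2M^2}$) then produces, along a subsequence still denoted $\{\Omega_n\}$, sets $\widetilde\Omega_n=\Omega_n^1\cup\Omega_n^2\subset\Omega_n$ with $\dist(\Omega_n^1,\Omega_n^2)\to+\infty$, $\liminf_n|\Omega_n^i|>0$ for $i=1,2$, and $\|w_{\Omega_n}-w_{\widetilde\Omega_n}\|_{L^1(\R^N)}\to0$.

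The last step---and the one I expect to be the main obstacle---is to deduce $\|R_{\Omega_n}-R_{\widetilde\Omega_n}\|_{\mathcal L(L^2(\R^N))}\to0$ from this $L^1$-closeness; it is also the only place where the nonlocal Green function estimate is used decisively. Let $K_n:=G_{\Omega_n}-G_{\widetilde\Omega_n}$ be the difference of Green kernels, so that $R_{\Omega_n}-R_{\widetilde\Omega_n}$ is the integral operator with kernel $K_n$; by monotonicity of Green functions $0\le K_n(x,y)\le G_{\R^N}(x,y)=a_{N,s}|x-y|^{2s-N}$, while $\iint_{\R^N\times\R^N}K_n=\int_{\R^N}(w_{\Omega_n}-w_{\widetilde\Omega_n})\to0$. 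Splitting $K_n=K_n\mathbf 1_{\{|x-y|<\rho\}}+K_n\mathbf 1_{\{|x-y|\ge\rho\}}$: the first piece is dominated pointwise by the convolution kernel $z\mapsto a_{N,s}|z|^{2s-N}\mathbf 1_{\{|z|<\rho\}}$, of $L^1$-norm $c\,\rho^{2s}$, hence contributes at most $c\,\rho^{2s}$ to the operator norm; the second piece is bounded by $a_{N,s}\rho^{2s-N}$, so its Hilbert--Schmidt norm squared is at most $a_{N,s}\rho^{2s-N}\iint K_n\to0$. Letting $n\to+\infty$ and then $\rho\to0$ gives the claim, which completes alternative (ii). The remaining points requiring care are the extraction from the proof of Theorem~\ref{ccp} that $v_k,w_k$ may be taken in $H^s_0(\Omega_{n_k})$, and the bookkeeping in the diagonal argument; I expect both to be routine.
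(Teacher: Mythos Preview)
Your proposal is essentially correct and reaches the same conclusion as the paper, but it diverges from the paper's argument in two interesting places.

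\medskip

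\textbf{Vanishing.} You dispose of the vanishing alternative via Lions' $L^p$-vanishing lemma in $H^s$ together with the interpolation $\|u_n\|_{L^2}\le\|u_n\|_{L^1}^\theta\|u_n\|_{L^p}^{1-\theta}$, which forces $\lambda=0$ and hence weak $\gamma$-convergence to $\emptyset$. The paper instead proves a fractional Lieb-type lemma (Lemma~\ref{lemma.lieb}), showing that for a suitable ball $B_R$ and translates $y_n$ one has $\lambda_1((y_n+\Omega_n)\cap B_R)\le 2\lambda_1(\Omega_n)+\varepsilon$; vanishing then yields $\lambda_1(\Omega_n)\to+\infty$ and hence $w_{\Omega_n}\to0$. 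Your route is shorter and avoids the Lieb construction.

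\medskip

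\textbf{Resolvent convergence.} Your passage from $\|w_{\Omega_n}-w_{\widetilde\Omega_n}\|_{L^1}\to0$ to $\|R_{\Omega_n}-R_{\widetilde\Omega_n}\|_{\mathcal L(L^2)}\to0$ via the Green-kernel splitting (near-diagonal bounded by the Riesz potential and controlled by Schur, off-diagonal controlled in Hilbert--Schmidt norm by $\iint K_n$) is a genuinely different argument from the paper's Lemma~\ref{lema.f}. The paper never touches Green functions: it uses the $L^\infty$ regularity $\|R_\Omega f\|_{L^\infty}\le C\|f\|_{L^p}$ (valid for $p>N/2s$) together with the duality identity $\int(R_\Omega f-R_{\widetilde\Omega}f)=\int f(w_\Omega-w_{\widetilde\Omega})$ to get an $\mathcal L(L^p)$ bound, and then Riesz--Thorin interpolation between $L^p$ and $L^{p'}$ to descend to $\mathcal L(L^2)$. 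Your kernel argument is arguably more elementary (no interpolation theorem), but it relies on the pointwise bound $G_\Omega(x,y)\le a_{N,s}|x-y|^{2s-N}$, which presupposes $N>2s$; for $N=1$, $s\ge\tfrac12$ the Riesz kernel is not locally integrable and the near-diagonal Schur estimate breaks down, so a separate treatment would be needed there. The paper's approach covers all $N$ and $s$ uniformly. You should also be prepared to justify existence and the domain monotonicity of the Green kernel for $s$-quasi-open sets of finite measure; this is true but not entirely routine, whereas the paper's argument works directly at the level of resolvents.

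\medskip

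The remaining steps (compactness case, torsional-energy comparison via $v_k+w_k$ as competitor, diagonal extraction) match the paper's Lemma~\ref{lema.a.tilde.a} and its surrounding argument; your definition of $\Omega_k^i$ via $d_k/3$-neighborhoods is a harmless variant of the paper's choice $\Omega_n^i=\{u_n>0\},\{v_n>0\}$.
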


Theorem \ref{main} gives, as a consequence, an existence result for optimal shapes for minimization problems, when the shape functional satisfies some structural assumptions.

\begin{thm} \label{compactnessdichotomy}
Let 
\[ \mathcal{A}(\R^N) := \left\{ \Omega \subset \R^N\,|\,\Omega \ \ s\text{-quasi open} \right\} \]
and let $J:\mathcal{A}(\R^N) \to (-\infty,+ \infty]$ be a shape functional satisfying the following assumptions:
\begin{enumerate}
 \item[(i)] $J$ is lower semicontinuous with respect to $\gamma$-convergence;
 \item[(ii)] $J$ is decreasing with respect to set inclusion: if $\Omega_1$, $\Omega_2 \in \mathcal{A}(\R^N)$, $\Omega_1 \subset \Omega_2$, then $J(\Omega_2)\leq J(\Omega_1)$;
 \item[(iii)] $J$ is invariant by translations;
 \item[(iv)] $J$ is bounded from below.
\end{enumerate}
Let $c>0$, and define \begin{equation} \label{minimizingproblem} m:= \inf \{J(\Omega)\,|\, \Omega \in {\mathcal{A}(\R^N)}, |\Omega|=c\}. \end{equation}
Then, one of the following situations occurs:
\begin{enumerate}
 \item[(i)]  Existence of an optimal shape: there exists a $s$-quasi open set $\hat{\Omega} \in \mathcal{A}(\R^N)$ such that $|\hat{\Omega}|=c$ and $J(\hat{\Omega})=m$.
 \item[(ii)] Dichotomy: there exists a minimizing sequence $\{\Omega_n\}_{n \in \N}$ with $|\Omega_n|=c$ for every $n \in \N$, such that $\Omega_n = \Omega_n^1 \cup \Omega_n^2$, where $\Omega_n^1$, $\Omega_n^2$ are such that $\text{dist}(\Omega_n^1,\Omega_n^2) \to  +\infty$, $\liminf_{n \to  +\infty} |\Omega_n^i| > 0$ for $i=1,2$, and $J(\Omega_n) \to m$ as $n \to  +\infty$.
 \end{enumerate}
\end{thm}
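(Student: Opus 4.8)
The plan is to feed an arbitrary minimizing sequence for \eqref{minimizingproblem} into Theorem~\ref{main} and to translate each of its two alternatives — formulated in terms of torsion functions and resolvent operators — into the corresponding alternative of the present statement. Fix a minimizing sequence $\{\Omega_n\}_{n\in\N}$, so that $|\Omega_n|=c$ and $J(\Omega_n)\to m$; by assumption~(iv) we have $m>-\infty$, and if $m=+\infty$ the statement is trivial, so we may assume $m\in\R$. Passing to a subsequence given by Theorem~\ref{main}, we are in its compactness case or in its dichotomy case, and it suffices to treat each separately.

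\emph{Compactness case.} Here $y_n+\Omega_n$ weakly $\gamma$-converges to a quasi-open set $\Omega$ with $|\Omega|\le\liminf_n|y_n+\Omega_n|=c$. By translation invariance~(iii) we have $J(y_n+\Omega_n)=J(\Omega_n)\to m$, so the key step is to deduce $J(\Omega)\le m$. For this I invoke the fact that a shape functional which is $\gamma$-lower semicontinuous (assumption~(i)) and monotone non-increasing with respect to set inclusion (assumption~(ii)) is automatically lower semicontinuous along weakly $\gamma$-convergent sequences — the nonlocal counterpart of the classical ingredient of the Buttazzo--Dal Maso theory, available here from the preliminaries in Section~\ref{sec.prel} and from \cite{fernandezbonderritortosalort}. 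Thus $J(\Omega)\le m$. If $|\Omega|=c$ set $\hat\Omega:=\Omega$; otherwise set $\hat\Omega:=\Omega\cup B$, where $B$ is a ball with $|B\setminus\Omega|=c-|\Omega|$ (which exists since $|\Omega|<c<+\infty$). In all cases $\hat\Omega$ is quasi-open, $|\hat\Omega|=c$ and $\Omega\subseteq\hat\Omega$, whence $J(\hat\Omega)\le J(\Omega)\le m$ by monotonicity, while $J(\hat\Omega)\ge m$ by the definition of $m$; therefore $J(\hat\Omega)=m$ and alternative~(i) holds.

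\emph{Dichotomy case.} We are given $\widetilde{\Omega}_n\subseteq\Omega_n$ with $\|R_{\Omega_n}-R_{\widetilde{\Omega}_n}\|_{\mathcal L(L^2(\R^N))}\to 0$, $\widetilde{\Omega}_n=\Omega_n^1\cup\Omega_n^2$, $\dist(\Omega_n^1,\Omega_n^2)\to+\infty$ and $\liminf_n|\Omega_n^i|>0$. Since $|\widetilde{\Omega}_n|\le|\Omega_n|=c$, I first complete the sets to volume exactly $c$ without spoiling the splitting: let $B_n$ be a ball of volume $c-|\widetilde{\Omega}_n|$ placed so that $\dist(B_n,\Omega_n^1)\to+\infty$, and set $\Xi_n^1:=\Omega_n^1$, $\Xi_n^2:=\Omega_n^2\cup B_n$, $\Xi_n:=\Xi_n^1\cup\Xi_n^2$. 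Then $|\Xi_n|=c$, $\dist(\Xi_n^1,\Xi_n^2)\to+\infty$ and $\liminf_n|\Xi_n^i|>0$, so it only remains to check that $\{\Xi_n\}$ is still a minimizing sequence, i.e.\ $J(\Xi_n)\to m$. As $\widetilde{\Omega}_n\subseteq\Xi_n$, monotonicity gives $J(\Xi_n)\le J(\widetilde{\Omega}_n)$, and $J(\Xi_n)\ge m$ by the definition of $m$, so it is enough to prove $J(\widetilde{\Omega}_n)\to m$. One inequality is immediate: $\widetilde{\Omega}_n\subseteq\Omega_n$ and monotonicity give $J(\widetilde{\Omega}_n)\ge J(\Omega_n)$, hence $\liminf_n J(\widetilde{\Omega}_n)\ge m$. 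For the reverse inequality I use the operator-norm convergence of the resolvents: applying both $R_{\Omega_n}$ and $R_{\widetilde{\Omega}_n}$ to the indicator function of $\Omega_n$ and using that $R_{\widetilde{\Omega}_n}$ only sees this datum on $\widetilde{\Omega}_n\subseteq\Omega_n$, one gets $\|w_{\Omega_n}-w_{\widetilde{\Omega}_n}\|_{L^2(\R^N)}\to 0$, so that $\{\Omega_n\}$ and $\{\widetilde{\Omega}_n\}$ become indistinguishable for the $\gamma$-topology; combined with the nested inclusion and $\gamma$-lower semicontinuity this yields $J(\widetilde{\Omega}_n)\le J(\Omega_n)+o(1)=m+o(1)$. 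Hence $J(\widetilde{\Omega}_n)\to m$, and alternative~(ii) holds.

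The step I expect to be the main obstacle is precisely this last one: showing that the split sequence remains minimizing, i.e.\ that $\{\Omega_n\}$ and $\{\widetilde{\Omega}_n\}$ being $\gamma$-asymptotic with $\widetilde{\Omega}_n\subseteq\Omega_n$ and $J(\Omega_n)\to m$ forces $\limsup_n J(\widetilde{\Omega}_n)\le m$. Since $J$ is only assumed $\gamma$-lower semicontinuous, not $\gamma$-continuous, mere $\gamma$-closeness of the two sequences is not enough; one must combine it with the monotonicity and with the fact that convergence of resolvents in $\mathcal L(L^2(\R^N))$ is genuinely stronger than $\gamma$-convergence — it controls not only the torsion functions but the whole resolvent, hence all the spectral data of the nested sets $\widetilde{\Omega}_n\subseteq\Omega_n$. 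A secondary, more routine point is the passage from $\gamma$-lower semicontinuity to lower semicontinuity under weak $\gamma$-convergence invoked in the compactness case.
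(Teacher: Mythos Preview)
Your outline matches the paper's proof step for step. In the compactness case you invoke precisely the ingredient the paper supplies as Proposition~\ref{similartoBDM} (monotonicity plus $\gamma$-l.s.c.\ imply weak-$\gamma$-l.s.c.), and then enlarge the weak-$\gamma$ limit to volume $c$; this is the paper's argument verbatim. In the dichotomy case your volume-correcting construction (attach a far-away ball to $\widetilde\Omega_n$) is exactly the ``suitable modification'' the paper alludes to in its last sentence.

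The step you flag as the main obstacle is real, and the paper does not prove it either: Section~\ref{sec.compactnessdichotomy} simply \emph{asserts} ``$J(\widetilde\Omega_n)\to m$'' with no justification. Your heuristic --- that $\|w_{\Omega_n}-w_{\widetilde\Omega_n}\|_{L^2}\to 0$ makes the two sequences ``indistinguishable for the $\gamma$-topology'' and hence forces $J(\widetilde\Omega_n)\le J(\Omega_n)+o(1)$ --- does not work, as you yourself note: $\gamma$-lower semicontinuity compares the value at a \emph{limit} to values along a \emph{convergent} sequence, not the values of $J$ along two non-convergent sequences whose torsion functions happen to be asymptotically close; and the inclusion $\widetilde\Omega_n\subset\Omega_n$ together with monotonicity yields only the wrong-way bound $J(\widetilde\Omega_n)\ge J(\Omega_n)$. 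Under the bare hypotheses (i)--(iv) there is nothing in the paper that closes this. For the intended applications $J(\Omega)=F(\lambda_1(\Omega),\dots,\lambda_k(\Omega))$ with $F$ continuous and nondecreasing, however, the gap evaporates: the estimate \eqref{estimatedunford} turns $\|R_{\Omega_n}-R_{\widetilde\Omega_n}\|_{\mathcal L(L^2(\R^N))}\to 0$ into $\lambda_j(\widetilde\Omega_n)-\lambda_j(\Omega_n)\to 0$ for every $j$, whence $J(\widetilde\Omega_n)-J(\Omega_n)\to 0$ directly. So your proposal is neither more nor less complete than the paper's own proof; you have simply made explicit a step the authors pass over.
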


Theorem \ref{compactnessdichotomy} applies in particular to spectral functionals of the kind
\[ 
J(\Omega):= F(\lambda_1(\Omega),...,\lambda_k(\Omega)),
\]
where $k \in \N$, $\lambda_j(\Omega)$ is the $j-$th eigenvalue of the Dirichlet fractional Laplacian, and $F: \R^k \to \R \cup \{ +\infty\}$ is a functional which is lower semicontinuous and nondecreasing in each variable.

\medskip

In the local case, existence of an optimal shape and the dichotomy situation can occur at the same time. Indeed, as we have pointed out, the classical Hong-Krahn-Szego inequality asserts that among all domains of fixed volume, the disjoint union of two equal balls has the smallest second eigenvalue. However, due to the nonlocal effects of the fractional Laplacian, the mutual position of two connected component has influence over the second eigenvalue, implying nonexistence of an optimal shape. Therefore it makes sense to ask whether existence of an optimal shape and dichotomy are two mutually exclusive situations in the nonlocal case. Up to our knowledge, this remains an open question.

\medskip

The manuscript is organized as follows. In section \ref{sec.prel} we introduce some preliminary definitions and notation. Section \ref{sec.ccp} is devoted to prove the concentration-compactness principle in the fractional setting. In section \ref{sec.convergencias} we define the notion of $\gamma$- and weak $\gamma$-convergence of sets as well as some  related  useful result, and finally in sections \ref{sec.main} and \ref{sec.compactnessdichotomy} we provide a proof of our main results.

\medskip

\textbf{Acknowledgements.}  The authors would like to express their gratitude to Lorenzo Brasco and Marco Squassina for useful discussions.  This work was started during a visit of A. S. to Aix-Marseille University in October 2017. The visit was supported by CONICET PIP 11220150100036CO.
A.S. wants to thank the first author for his hospitality which made the visit very enjoyable.

\section{Definitions and preliminary results} \label{sec.prel}

We begin this section with some definitions.

\subsection{Fractional Sobolev spaces and $s$-capacity of sets}

For $s \in (0,1)$, the fractional Sobolev space $H^s(\R^N)$ is defined as
\[ H^s(\R^N):= \left\{ u \in L^2(\R^N)\,|\,[u]_{H^s(\R^N)} <  +\infty \right\}, \]
endowed with the norm $\|\cdot\|_{H^s(\R^N)}$ defined by
\[ \|u\|_{H^s(\R^N)} := \left( \|u\|_{L^2(\R^N)}^2 + [u]_{H^s(\R^N)}^2\right)^{\frac{1}{2}},\]
 where $[\cdot]_{H^s(\R^N)}$ is the \emph{Gagliardo seminorm} defined as
\[ [u]_{H^s(\R^N)} := \left(\int_{\R^N} \int_{\R^N} \frac{|u(x)-u(y)|^2}{|x-y|^{N+2s}}\,dx\,dy \right)^{\frac{1}{2}}.\]
The Gagliardo seminorm of a function $u \in H^s(\R^N)$ can also be expressed in terms of its Fourier transform $\mathcal{F}u$ as
\[ [u]_{H^s(\R^N)}^2 = \frac{2}{C_{s,N}} \int_{\R^N} |\xi|^{2s} |\mathcal{F}u(\xi)|^2\,d\xi,\]
where $C_{s,N}$ is the normalization constant in the definition of $(-\Delta)^s$, given by
\[ C_{s,N} = \left(\int_{\R^N} \frac{1-\cos{\zeta_1}}{|\zeta|^{N+2s}}\,d\zeta\right)^{-1}\]
(see \cite[Proposition 3.4]{dinezzapalatuccivaldinoci}).
Given a measurable set $\Omega\subset \R^N$, for any $s \in (0,1)$ we define the \emph{$s$-capacity} of $\Omega$ as
$$
\cp_s(\Omega)= \inf \left\{ [u]_{H^s(\R^N)}^2 \colon u\in H^s(\R^N), u\geq 1 \text{ a.e. on a neighborhood of } \Omega  \right\}.
$$
We say that a property holds \emph{$s$-quasi everywhere} if it holds up to a set of null $s$-capacity. A measurable subset $\Omega\subset \R^N$ is a {\em $s$-quasi open} set if there exists   a decreasing sequence $\{\omega_n\}_{n\in  \N}$ of open subsets of $\R^N$ such that $\cp_s(\omega_n) \to 0$, as $n\to+\infty$, and $\Omega \cup \omega_n$ is open.

A function $u\in H^s(\R^N)$ is said to be \emph{$s$-quasi continuous} if for every $\ve>0$ there exists an open set $G\subset \R^N$  such that $\cp_s(G)<\ve$ and $u|_{\R^N\setminus G}$ is continuous. It is well-known that $\cp_s$ is a Choquet capacity on $\R^N$ \cite[Section 2.2]{AH} and for every $u\in H^s(\R^N)$ there exists a unique $s$-quasi continuous function $\tilde u:\R^N\to \R$ such that $\tilde u = u$ $s$-quasi everywhere on $\R^N$. Therefore we will always consider, without loss of generality, that a function $u \in H^s(\R^N)$ coincides with its $s$-quasi continuous representative.  If $u:\R^N \to \R$ is $s$-quasi continuous, then every superlevel set $\{u > t\}$ is $s$-quasi open.

For a generic measurable set $\Omega\subset \R^N$, we define the fractional Sobolev space $H^s_0(\Omega)$ as 
$$
H^s_0(\Omega)=\{ u\in H^s(\R^N) : u=0 \quad s\text{-q.e. on } \R^N\setminus \Omega\}.
$$

The following Poincar\'e's inequality holds for measurable sets of finite measure.
\begin{prop} \label{poincare}
Let $\Omega \subset \R^N$ be a measurable set of finite Lebesgue measure. Then, there exists a constant $C=C(s, |\Omega|)>0$ such that, for every $u \in H^s_0(\Omega)$,
\[ \|u\|_{L^2(\Omega)}\leq C [u]_{H^s(\R^N)}.\]
\end{prop}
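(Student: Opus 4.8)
\textbf{Proof plan for the fractional Poincar\'e inequality (Proposition \ref{poincare}).}

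The plan is to deduce the inequality from the fractional Sobolev embedding together with H\"older's inequality applied on the set $\Omega$, exploiting the fact that functions in $H^s_0(\Omega)$ vanish outside $\Omega$ and hence are genuinely functions on all of $\R^N$ supported (in the measure-theoretic sense) in $\Omega$. The key point is that the right-hand side involves only the Gagliardo seminorm, not the full $H^s$-norm, so we cannot simply invoke the embedding $H^s(\R^N)\hookrightarrow L^{2^*_s}(\R^N)$ directly when $2s<N$; we need a seminorm version of it.

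First I would treat the case $2s<N$, where the fractional Sobolev inequality gives a constant $S=S(s,N)>0$ with
\[
\|u\|_{L^{2^*_s}(\R^N)}^2 \le S\,[u]_{H^s(\R^N)}^2, \qquad 2^*_s=\frac{2N}{N-2s},
\]
valid for all $u\in H^s(\R^N)$ (this is the purely homogeneous form of the embedding, see e.g. the Sobolev inequality in \cite{dinezzapalatuccivaldinoci}). Since $u\in H^s_0(\Omega)$ vanishes $s$-q.e., hence a.e., outside $\Omega$, H\"older's inequality with exponents $\tfrac{2^*_s}{2}$ and its conjugate yields
\[
\|u\|_{L^2(\Omega)}^2=\int_\Omega |u|^2\,dx \le \|u\|_{L^{2^*_s}(\Omega)}^2\,|\Omega|^{1-\frac{2}{2^*_s}}=\|u\|_{L^{2^*_s}(\R^N)}^2\,|\Omega|^{\frac{2s}{N}}.
\]
Combining the two displays gives $\|u\|_{L^2(\Omega)}^2 \le S\,|\Omega|^{2s/N}\,[u]_{H^s(\R^N)}^2$, i.e. the claim with $C=(S\,|\Omega|^{2s/N})^{1/2}$.

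For the remaining cases $2s\ge N$ (so $N=1$ and $s\in[1/2,1)$) I would argue by a scaling/truncation device or invoke the subcritical embedding directly: when $2s>N$ one has $H^s(\R^N)\hookrightarrow L^\infty(\R^N)$ but this does not control the seminorm alone, so instead I would pick any exponent $p>2$ for which the homogeneous inequality $\|u\|_{L^p(\R^N)}\le C_p[u]_{H^s(\R^N)}$ holds (available since $s>N/p$ for $p$ large, via the same homogeneous Sobolev-type estimate, or by interpolation after a dyadic decomposition in Fourier space of $|\xi|^{2s}|\mathcal F u|^2$), and then repeat the H\"older step with exponents $p/2$ and its conjugate to absorb the measure of $\Omega$. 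The main obstacle is precisely this: ensuring a \emph{homogeneous} (seminorm-only) Sobolev inequality is available in every regime of $s$ and $N$, rather than the inhomogeneous one with the full $H^s$-norm; once that is in hand, the H\"older truncation to $\Omega$ is routine and produces the measure-dependent constant $C(s,|\Omega|)$. An alternative, cleaner route that avoids the case distinction is to use the fact that for $u$ supported in a set of finite measure one has the Faber--Krahn-type bound $\lambda_1^s(\Omega)\ge \lambda_1^s(B)$ where $B$ is the ball with $|B|=|\Omega|$, which is exactly the Rayleigh quotient inequality $[u]_{H^s(\R^N)}^2/\|u\|_{L^2}^2\ge \lambda_1^s(\Omega)>0$; this immediately gives the desired estimate with $C=\lambda_1^s(\Omega)^{-1/2}$, and $\lambda_1^s(\Omega)>0$ follows from compactness of the embedding $H^s_0(\Omega)\hookrightarrow L^2(\Omega)$ for $|\Omega|<\infty$. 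I would present the first (Sobolev + H\"older) argument as the main proof since it is elementary and gives an explicit constant, and remark on the Faber--Krahn alternative.
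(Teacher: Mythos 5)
Your main argument (homogeneous fractional Sobolev inequality plus H\"older on $\Omega$) is correct and complete in the regime $2s<N$, and there it is a genuinely different, more elementary route than the paper's, with an explicit constant $C=(S\,|\Omega|^{2s/N})^{1/2}$; the paper instead symmetrizes. But your treatment of the remaining case $N=1$, $s\in[\tfrac12,1)$ contains a genuine gap: the inequality $\|u\|_{L^p(\R^N)}\le C_p\,[u]_{H^s(\R^N)}$ for all $u\in H^s(\R^N)$ \emph{cannot} hold for any finite $p$ when $2s\ge N$. Indeed, with $u_\lambda(x)=u(\lambda x)$ one has $\|u_\lambda\|_{L^p}=\lambda^{-N/p}\|u\|_{L^p}$ and $[u_\lambda]_{H^s}=\lambda^{s-N/2}[u]_{H^s}$, so such an inequality forces $\tfrac1p=\tfrac12-\tfrac sN$, i.e. $p=\tfrac{2N}{N-2s}$, which is not a finite positive exponent once $2s\ge N$ (and the endpoint $L^\infty$ also fails). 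The condition ``$s>N/p$'' you invoke governs the \emph{inhomogeneous} embedding and does not rescue a seminorm-only bound. Your fallback via Faber--Krahn is essentially the paper's actual proof in disguise — the paper symmetrizes $|u|$ onto the ball $\Omega^*$ with $|\Omega^*|=|\Omega|$ using the Almgren--Lieb/P\'olya--Szeg\H{o} inequality $[|u|^*]_{H^s}\le[u]_{H^s}$ and then applies a Poincar\'e inequality on the ball — but as you state it, deducing $\lambda_1^s(\Omega)>0$ from compactness of $H^s_0(\Omega)\hookrightarrow L^2(\Omega)$ is circular in this paper's logical order, since that compactness is itself derived from the Poincar\'e inequality you are proving.

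A clean elementary repair that works for every $s\in(0,1)$ and every $N$, and exploits precisely the finiteness of $|\Omega|$ (which breaks the scaling obstruction): since $u=0$ a.e. outside $\Omega$, restrict the double integral to $x\in\{u\neq0\}$, $y\in\{u=0\}$ to get
\begin{equation*}
[u]_{H^s(\R^N)}^2\;\ge\;2\int_{\{u\neq0\}}|u(x)|^2\left(\int_{\{u=0\}}\frac{dy}{|x-y|^{N+2s}}\right)dx .
\end{equation*}
For each fixed $x$, the complement of $\{u=0\}$ has measure at most $|\Omega|$, so by the bathtub principle the inner integral is at least $\int_{\R^N\setminus B_r(x)}|x-y|^{-N-2s}\,dy=\tfrac{N\omega_N}{2s}r^{-2s}$ with $|B_r|=|\Omega|$, a positive constant depending only on $s$, $N$ and $|\Omega|$. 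This yields the claim directly and would let you dispense with the case distinction altogether.
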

\begin{proof}
Let $u$ be a function in $H^s_0(\Omega)$ and consider the ball $\Omega^*$  such that $|\Omega^*|=|\Omega|$. Let $v:=|u|^*$ be the Schwarz symmetrization of $|u|$, as defined in \cite[Definition 1.3.1]{kesavan}. By \cite[Theorem 9.2]{almgrenlieb}, $v \in H^s_0(\Omega^*)$, and
\[[v]_{H^s(\R^N)}\leq [|u|]_{H^s(\R^N)} \leq [u]_{H^s(\R^N)}.\]
By \cite[Lemma 2.4]{brascolindgrenparini}, there exists $C=C(s,|\Omega|)>0$ such that
\[ \|v\|_{L^2(\Omega^*)} \leq C[v]_{H^s(\R^N)}. \]
Since symmetrization preserve the $L^2$-norm,
\[ \|u\|_{L^2(\Omega)} = \|v\|_{L^2(\Omega^*)} \leq C[v]_{H^s(\R^N)} \leq C[u]_{H^s(\R^N)}, \]
and the claim follows.
\end{proof}

The previous proposition leads to a useful compactness result.
\begin{prop}
Let $\Omega \subset \R^N$ be a measurable set of finite Lebesgue measure. Then, for every bounded sequence $\{u_n\}_{n \in \N}$ in $H^s_0(\Omega)$, there exists a subsequence $\{u_{n_k}\}_{k \in \N}$ and a function $u \in H^s_0(\Omega)$ such that $u_{n_k} \to u$ in $L^2(\Omega)$.
\end{prop}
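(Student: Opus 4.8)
The plan follows the classical scheme for proving compactness of embeddings: produce a candidate limit by weak compactness, upgrade to strong local convergence via a fractional Rellich theorem, and close the argument with a tightness estimate in which the finiteness of $|\Omega|$ is used. Set $M := \sup_{n} \|u_n\|_{H^s(\R^N)} < +\infty$, and recall that every $u_n$ vanishes $s$-q.e. (in particular a.e.) on $\R^N \setminus \Omega$.

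First I would observe that $H^s_0(\Omega)$ is a closed, hence weakly closed, subspace of $H^s(\R^N)$, so a subsequence of $\{u_n\}$ converges weakly in $H^s(\R^N)$ to some $u \in H^s_0(\Omega)$; I relabel it $\{u_n\}$. Since for each $R \in \N$ the restriction of $u_n$ to $B_R$ is bounded in $H^s(B_R)$ and the embedding $H^s(B_R) \hookrightarrow L^2(B_R)$ is compact (fractional Rellich--Kondrachov theorem, see \cite{dinezzapalatuccivaldinoci}), a diagonal extraction over $R \in \N$ yields a further subsequence, not relabeled, with $u_n \to u$ in $L^2(B_R)$ for every $R \in \N$.

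The heart of the matter is the tightness estimate $\lim_{R \to +\infty} \sup_{n} \|u_n\|_{L^2(\R^N \setminus B_R)} = 0$, and this is where $|\Omega| < +\infty$ enters. Fix $\eta_R \in C^\infty(\R^N)$ with $\eta_R \equiv 0$ on $B_R$, $\eta_R \equiv 1$ on $\R^N \setminus B_{2R}$, $0 \le \eta_R \le 1$, $|\nabla \eta_R| \le 2/R$. Because $u_n = 0$ $s$-q.e. on $\R^N \setminus \Omega$ and $\eta_R \equiv 0$ on $B_R$, the product $\eta_R u_n$ lies in $H^s_0(\Omega \setminus B_R)$; splitting the Gagliardo double integral via $\eta_R(x)u_n(x) - \eta_R(y)u_n(y) = \eta_R(x)\big(u_n(x)-u_n(y)\big) + u_n(y)\big(\eta_R(x)-\eta_R(y)\big)$ and using the elementary bound $\int_{\R^N} |x-y|^{-N-2s}\,|\eta_R(x)-\eta_R(y)|^2\,dx \le C_{N,s}\,R^{-2s}$ gives $[\eta_R u_n]_{H^s(\R^N)} \le C(M,N,s)$ for all $R \ge 1$ and all $n$. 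Applying Proposition \ref{poincare} on $\Omega \setminus B_R$, whose measure tends to $0$ as $R \to +\infty$, and noting that the associated Poincaré constant tends to $0$ together with the measure of the set (from the proof of Proposition \ref{poincare} and a scaling argument it behaves like $|\Omega \setminus B_R|^{s/N}$), we obtain
\[ \|u_n\|_{L^2(\R^N \setminus B_{2R})} \le \|\eta_R u_n\|_{L^2(\R^N)} \le C(s,|\Omega \setminus B_R|)\,[\eta_R u_n]_{H^s(\R^N)} \le C(s,|\Omega \setminus B_R|)\,C(M,N,s), \]
and the right-hand side tends to $0$ as $R \to +\infty$, uniformly in $n$. (One may instead invoke the Sobolev embedding $H^s(\R^N) \hookrightarrow L^q(\R^N)$ for a suitable $q>2$ and apply Hölder's inequality on the small-measure set $\Omega \setminus B_R$, with the same effect.)

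Finally I would combine the three ingredients. Given $\ve > 0$, pick $R$ so large that $\sup_n \|u_n\|_{L^2(\R^N \setminus B_R)} \le \ve$ and, since $u \in L^2(\R^N)$, also $\|u\|_{L^2(\R^N \setminus B_R)} \le \ve$; then, for every $n$,
\[ \|u_n - u\|_{L^2(\R^N)} \le \|u_n - u\|_{L^2(B_R)} + \|u_n\|_{L^2(\R^N \setminus B_R)} + \|u\|_{L^2(\R^N \setminus B_R)} \le \|u_n - u\|_{L^2(B_R)} + 2\ve, \]
so $\limsup_{n} \|u_n - u\|_{L^2(\R^N)} \le 2\ve$, and letting $\ve \to 0$ gives $u_n \to u$ in $L^2(\R^N)$, hence in $L^2(\Omega)$, with $u \in H^s_0(\Omega)$. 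The only genuinely delicate step is the tightness estimate of the third paragraph; the rest is soft functional analysis together with the fractional Rellich theorem.
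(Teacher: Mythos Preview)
Your proof is correct. The paper itself gives no argument beyond the sentence ``the proof can be performed as in \cite[Theorem 2.7]{brascolindgrenparini}, using the Poincar\'e inequality stated in Proposition \ref{poincare}'', so you have supplied the details that the paper outsources. The scheme you follow --- weak compactness in $H^s(\R^N)$, local strong compactness via the fractional Rellich theorem on balls, and uniform tightness from the finite-measure hypothesis through the Poincar\'e inequality on $\Omega\setminus B_R$ --- is exactly the standard route and is in the same spirit as the referenced result; your explicit observation that the Poincar\'e constant scales like $|\Omega\setminus B_R|^{s/N}$ is what makes the tightness step quantitative. The only mild shortcut worth flagging is the assertion that $H^s_0(\Omega)$ is closed in $H^s(\R^N)$: this is true, but it rests on the potential-theoretic fact that $H^s$-convergence implies $s$-quasi-everywhere convergence along a subsequence (see e.g.\ \cite{AH}), not just a.e.\ convergence; you may wish to cite this explicitly. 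Your parenthetical alternative via the Sobolev embedding $H^s(\R^N)\hookrightarrow L^q(\R^N)$ and H\"older on the small-measure set $\Omega\setminus B_R$ is in fact slightly cleaner, since it bypasses the cut-off computation entirely.
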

\begin{proof}
The proof can be performed as in \cite[Theorem 2.7]{brascolindgrenparini}, using the Poincaré inequality stated in Proposition \ref{poincare}. 
\end{proof}

Given an $s$-quasi open set $\Omega$ of finite Lebesgue measure and $f\in L^2(\R^N)$ we denote by $R_\Omega$ the resolvent operator of the fractional Laplacian with Dirichlet boundary conditions, that is, $R_\Omega: L^2(\R^N) \to L^2(\R^N)$ and $R_\Omega(f)=u$, where $u$ is the weak solution of 
\begin{equation} \label{eqA}
\left\{\begin{array}{rcll}
(-\Delta)^s u &=& f & \text{in }\Omega,\\
u & = & 0 & \text{in }\R^N \setminus \Omega.\end{array}\right.
\end{equation}
In particular, $w_\Omega=R_\Omega(1)$. It is easy to check that  $R_{\Omega}$  defines a continuous compact, self-adjoint linear operator from $L^2(\R^N)$ in itself. We denote by $\|\cdot\|_{\mathcal{L}(L^2(\R^N))}$ the corresponding operator norm. 
Given an $s$-quasi open set $\Omega$, we say that $\lambda$ is an \emph{eigenvalue} of the fractional Laplacian if there exists a nontrivial function $u \in H^s_0(\Omega)$, called \emph{eigenfunction}, which is a weak solution of
\begin{equation} \label{eqB}
\left\{\begin{array}{r c l l}
(-\Delta)^s u &=& \lambda u & \text{in }\Omega,\\
u & = & 0 & \text{in }\R^N \setminus \Omega.
\end{array}\right.
\end{equation}
According to Courant-Fischer's min-max principle, for every $s$-quasi-open set $\Omega\subset \R^N$ of finite Lebesgue measure there exists a sequence $\{\lambda_k(\Omega)\}_{k \in \N}$ of eigenvalues of the fractional Laplacian, satisfying
\[ 0 < \lambda_1(\Omega) < \lambda_2(\Omega) \leq  \dots \leq \lambda_k(\Omega) \to   +\infty \qquad \text{as }k \to  +\infty.\]
 The first eigenvalue $\lambda_1(\Omega)$ is characterized as
\[ \lambda_1(\Omega) = \inf_{u \in H^s_0(\Omega)\setminus \{0\}} \frac{[u]_{H^s(\R^N)}^2}{\|u\|_{L^2(\R^N)}^2}\]
and the associated first eigenfunction is unique (up to multiplicative constant) and strictly positive (or negative) in $\Omega$.

\medskip

Eigenfunctions satisfy the following regularity property.
\begin{prop}
Let $\Omega \subset \R^N$ be a quasi-open set of finite Lebesgue measure, and let $u \in H^s_0(\Omega)$ be an eigenfunction of the fractional Laplacian. Then, $u \in L^\infty(\Omega)$. 
\end{prop}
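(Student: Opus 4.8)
The plan is to prove the $L^\infty$-bound for a Dirichlet eigenfunction of the fractional Laplacian via a Moser-type iteration argument, exploiting the Poincar\'e inequality of Proposition \ref{poincare} together with the fractional Sobolev embedding. First I would observe that, since $u \in H^s_0(\Omega)$ solves $(-\Delta)^s u = \lambda u$ weakly, we may assume $\lambda > 0$ (indeed $\lambda_1(\Omega)>0$) and, replacing $u$ by $|u|$ if necessary — which does not increase the Gagliardo seminorm and preserves the $L^2$-norm — it suffices to bound $u_+ = \max\{u,0\}$ and $u_- = \max\{-u,0\}$ separately; so without loss of generality $u \geq 0$. The weak formulation reads
\[
\frac{C_{s,N}}{2}\int_{\R^N}\int_{\R^N} \frac{(u(x)-u(y))(\varphi(x)-\varphi(y))}{|x-y|^{N+2s}}\,dx\,dy = \lambda \int_{\R^N} u\,\varphi
\]
for every $\varphi \in H^s_0(\Omega)$, and the strategy is to insert the test functions $\varphi = u_k := \min\{u, M_k\}^{\beta}$ (or truncations thereof, to guarantee admissibility in $H^s_0(\Omega)$), for a suitable increasing sequence of exponents $\beta = \beta_j \to +\infty$ and truncation levels $M_k \to +\infty$.

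The key steps, in order, are: (1) establish the algebraic inequality
\[
\frac{(a-b)(a^\beta - b^\beta)}{\text{(something)}} \geq c_\beta\, (a^{(\beta+1)/2}-b^{(\beta+1)/2})^2
\]
for $a,b\geq 0$, which allows one to bound the Gagliardo seminorm of $u^{(\beta+1)/2}$ from above by a constant (depending on $\beta$) times $\lambda \int u^{\beta+1}$; (2) apply the fractional Sobolev inequality $\|v\|_{L^{2^*_s}(\R^N)}^2 \leq C\,[v]_{H^s(\R^N)}^2$ (valid for $N>2s$; if $N\leq 2s$ one uses a fixed higher Lebesgue exponent, available since $|\Omega|<\infty$ via Proposition \ref{poincare} and interpolation) with $v = u^{(\beta+1)/2}$ to obtain $\|u\|_{L^{2^*_s(\beta+1)/2}}$ controlled by $\|u\|_{L^{\beta+1}}$ raised to appropriate powers; (3) iterate, setting $\beta_{j+1}+1 = \tfrac{2^*_s}{2}(\beta_j+1)$ so that the exponents grow geometrically, and track the constants to show the resulting product converges, yielding $\|u\|_{L^\infty} \leq C\,\|u\|_{L^2}$ with $C = C(N,s,\lambda,|\Omega|)$. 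Throughout, the truncation level $M_k$ is sent to infinity first (using Fatou and monotone convergence, plus the a priori knowledge that $u \in L^{\beta+1}$ at each stage, which starts from $u \in L^2$ and the embedding) so that every test function used is genuinely in $H^s_0(\Omega)$.

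The main obstacle is step (1) combined with the bookkeeping in step (3): one must choose the truncations carefully so that $\min\{u,M_k\}^{\beta}$ — rather than $u^\beta$ itself — is an admissible test function in $H^s_0(\Omega)$ and lies in the right space, and one must verify that the nonlocal "error terms" coming from the interaction between the region $\{u \leq M_k\}$ and $\{u > M_k\}$ have the right sign or can be absorbed; this is where the nonlocality genuinely complicates the classical De Giorgi–Moser scheme. Once the iteration inequality $\|u\|_{L^{p_{j+1}}} \leq (C\beta_j)^{1/\beta_j}\|u\|_{L^{p_j}}$ is in hand with $p_j \to \infty$, the conclusion follows from $\sum_j \tfrac{\log(C\beta_j)}{\beta_j} < +\infty$, since $\beta_j$ grows like $(2^*_s/2)^j$. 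Alternatively, one can shortcut the whole argument by invoking known regularity results for the fractional Laplacian (e.g., the $L^2 \to L^\infty$ bound for the resolvent $R_\Omega$, which is already used implicitly when asserting compactness of $R_\Omega$), noting that the eigenfunction satisfies $u = \lambda R_\Omega(u)$ and $R_\Omega$ maps $L^q$ into $L^\infty$ for $q$ large enough, then bootstrapping from $u \in L^2$; I would present the Moser iteration as the self-contained route, since it keeps the paper independent of external regularity machinery.
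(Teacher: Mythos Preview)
Your proposal is correct and follows essentially the same route as the paper: the paper's proof is a one-line reference to \cite[Theorem~3.2]{franzinapalatucci} (together with the embeddings in \cite[Theorems~6.5 and 6.9]{dinezzapalatuccivaldinoci}), and the argument there is exactly the Moser iteration you have outlined --- test with truncated powers of $u$, use the elementary inequality $(a-b)(a^\beta-b^\beta)\geq c_\beta\,(a^{(\beta+1)/2}-b^{(\beta+1)/2})^2$, apply the fractional Sobolev inequality, and iterate. One small caution: the remark ``replacing $u$ by $|u|$'' is imprecise since $|u|$ need not be an eigenfunction, but your subsequent reduction to $u_+$ and $u_-$ (tested against the original equation) is the right fix and is how the cited reference proceeds.
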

\begin{proof}
The proof can be performed as in \cite[Theorem 3.2]{franzinapalatucci} taking into account Theorems 6.5 and 6.9 from \cite{dinezzapalatuccivaldinoci}.
\end{proof}

\section{The concentration-compactness principle} \label{sec.ccp}

In this section we prove Theorem \ref{ccp}.

\begin{proof}[Proof of Theorem \ref{ccp}]
All the assertions of this theorem, with exception of \eqref{liminfconccomp}, follow from the classical concentration-compactness lemma \cite[Lemma 1.1]{lions}. To prove \eqref{liminfconccomp}, we suitably modify \cite[Lemma III.1]{lions}. Let $\varepsilon>0$, and let $R_0>0$ be chosen as in \cite[Lemma III.1]{lions}. Let us define two cut-off functions $\varphi, \psi \in C^\infty_c(\R^N)$ satisfying $0 \leq \varphi,\,\psi \leq 1$, $\varphi \equiv 1$ on $B_1$, $\varphi \equiv 0$ on $\R^N \setminus B_2$ and $\psi \equiv 0$ on $B_1$, $\psi \equiv 1$ on $\R^N \setminus B_2$. Denote by $\varphi_R$, $\psi_R$ the functions defined by
\begin{equation} \label{psi.varphi}
\varphi_R(x):=\varphi\left(\frac{x}{R}\right),\qquad \psi_R(x):=\psi\left(\frac{x}{R}\right).
\end{equation}
For any function $u \in H^s(\R^N)$ with $[u]_{H^s(\R^N)} \leq M$ we have
\begin{align*} 
& \int_{\R^N}\int_{\R^N} \frac{|\varphi_R(x)u(x)-\varphi_R(y)u(y)|^2}{|x-y|^{N+2s}}\,dx\,dy \\ 
& = \int_{\R^N}\int_{\R^N} \frac{|\varphi_R(x)u(x)+\varphi_R(x)u(y)-\varphi_R(x)u(y)-\varphi_R(y)u(y)|^2}{|x-y|^{N+2s}}\,dx\,dy \\ 
& = \int_{\R^N}\int_{\R^N} |\varphi_R(x)|^2 \frac{ |u(x)-u(y)|^2}{|x-y|^{N+2s}}\,dx\,dy  + \int_{\R^N}\int_{\R^N} |u(y)|^2\frac{|\varphi_R(x)-\varphi_R(y)|^2}{|x-y|^{N+2s}}\,dx\,dy \\ 
&\quad  + 2\int_{\R^N}\int_{\R^N} \frac{\varphi_R(x)u(y)[\varphi_R(x)-\varphi_R(y)][u(x)-u(y)]}{|x-y|^{N+2s}}\,dx\,dy.
\end{align*}
By the computations in \cite[Lemma A.2]{BSY}, it is possible to estimate
\[ \int_{\R^N}\int_{\R^N} |u(y)|^2\frac{|\varphi_R(x)-\varphi_R(y)|^2}{|x-y|^{N+2s}}\,dx\,dy \leq \frac{C}{R^{2s}},\]
where $C$ only depends on $\|\nabla \varphi\|_\infty$ and $\|u\|_{L^2(\R^N)}$.

Moreover, the Cauchy-Schwarz inequality together with the last inequality gives that
\begin{align*} & \int_{\R^N}\int_{\R^N} \frac{\varphi_R(x)u(y)[\varphi_R(x)-\varphi_R(y)][u(x)-u(y)]}{|x-y|^{N+2s}}\,dx\,dy \\ & \leq \left(\int_{\R^N}\int_{\R^N} \frac{|u(y)|^2 |\varphi_R(x)-\varphi_R(y)|^2}{|x-y|^{N+2s}}\,dx\,dy \right)^\frac{1}{2}\left(\int_{\R^N}\int_{\R^N} \frac{|\varphi_R(x)|^2|u(x)-u(y)|^2}{|x-y|^{N+2s}}\,dx\,dy \right)^\frac{1}{2} \\ & \leq \left(\int_{\R^N}\int_{\R^N} \frac{|u(y)|^2 |\varphi_R(x)-\varphi_R(y)|^2}{|x-y|^{N+2s}}\,dx\,dy \right)^\frac{1}{2}\left(\int_{\R^N}\int_{\R^N} \frac{|u(x)-u(y)|^2}{|x-y|^{N+2s}}\,dx\,dy \right)^\frac{1}{2} \\ & \leq \frac{C}{R^s},\end{align*}
where $C$ only depends on $\|\nabla \varphi\|_\infty$, $\|u\|_{L^2(\R^N)}$, and $[u]_{H^s(\R^N)}$. 

Similar computations hold true for the quantity
\[ \int_{\R^N}\int_{\R^N} \frac{|\psi_R(x)u(x)-\psi_R(y)u(y)|^2}{|x-y|^{N+2s}}\,dx\,dy.\]
Therefore it is possible to choose $R_1 \geq R_0$ such that, for $R \geq R_1$, and for every $n \in \N$,
\[ \bigg|\int_{\R^N}\int_{\R^N} \frac{|\varphi_R(x)u_n(x)-\varphi_R(y)u_n(y)|^2}{|x-y|^{N+2s}}\,dx\,dy - \int_{\R^N}\int_{\R^N} \frac{|\varphi_R(x)|^2 |u_n(x)-u_n(y)|^2}{|x-y|^{N+2s}}\,dx\,dy\bigg| \leq \varepsilon,\]
\[ \bigg|\int_{\R^N}\int_{\R^N} \frac{|\psi_R(x)u_n(x)-\psi_R(y)u_n(y)|^2}{|x-y|^{N+2s}}\,dx\,dy - \int_{\R^N}\int_{\R^N} \frac{|\psi_R(x)|^2 |u_n(x)-u_n(y)|^2}{|x-y|^{N+2s}}\,dx\,dy\bigg| \leq \varepsilon.\]
The claim follows defining 
\[ 
v_k(x) = \varphi_{R_1}(x-y_k)u_{n_k}(x), \qquad w_k(x) = \psi_{R_k}(x-y_k)u_{n_k}(x),
\]
where $y_k$ and $R_k \to  +\infty$  are defined as in \cite[pp 136-137]{lions}   and observing that
\begin{align*} & \int_{\R^N}\int_{\R^N} \frac{|u_{n_k}(x)-u_{n_k}(y)|^2}{|x-y|^{N+2s}}\,dx\,dy \\ & \geq \int_{\R^N}\int_{\R^N} \frac{|\varphi_{R_1}(x)|^2 |u_{n_k}(x)-u_{n_k}(y)|^2}{|x-y|^{N+2s}}\,dx\,dy + \int_{\R^N}\int_{\R^N} \frac{|\psi_{R_k}(x)|^2 |u_{n_k}(x)-u_{n_k}(y)|^2}{|x-y|^{N+2s}}\,dx\,dy
\end{align*}
since $\varphi_{R_1}$ and $\psi_{R_k}$ have disjoint support for $k$ big enough, and therefore
\[ |\varphi_{R_1}(x)|^2 + |\psi_{R_k}(x)|^2 \leq 1 \qquad \text{for every } x \in \R^N. \]
\end{proof}

\begin{cor}
In the dichotomy case, it is possible to find sequences $\{u_k^{(1)}\}_{k \in \N}$, $\{u_k^{(2)}\}_{k \in \N} \subset H^s(\R^N)$ such that
\[
\|u_{n_k}-u_k^{(1)}-u_k^{(2)}\|_{L^2(\R^N)}  \to 0 \qquad \text{for }k \to +\infty;
\]
\[ 
\int_{\R^N} |u_k^{(1)}|^2  \to \alpha ,\qquad \int_{\R^N} |u_k^{(2)}|^2 \to \lambda-\alpha \qquad \text{for }k \to +\infty;
\]
\[ 
\dist(\supp u_k^{(1)},\supp u_k^{(2)}) \to +\infty \qquad \text{for }k \to +\infty;
\]
\begin{equation} \label{liminfconccomp2} 
\liminf_{k \to +\infty} \left( [u_{n_k}]^2_{H^s(\R^N)} - [u_k^{(1)}]^2_{H^s(\R^N)}-[u_k^{(2)}]^2_{H^s(\R^N)}\right) \geq 0. \end{equation} 
\end{cor}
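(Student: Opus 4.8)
The plan is to reduce the Corollary to the dichotomy alternative of Theorem \ref{ccp} by a diagonalization over the parameter $\varepsilon$. For each $j \in \N$ I would apply that alternative with $\varepsilon = 1/j$, obtaining an index $k_0(j) \in \N$ and sequences $\{v_k^{(j)}\}_{k \in \N}, \{w_k^{(j)}\}_{k \in \N} \subset H^s(\R^N)$ such that, for every $k \geq k_0(j)$,
\[
\|u_{n_k} - v_k^{(j)} - w_k^{(j)}\|_{L^2(\R^N)} \leq \delta(1/j), \qquad \Big| \int_{\R^N} |v_k^{(j)}|^2 - \alpha \Big| \leq \tfrac1j, \qquad \Big| \int_{\R^N} |w_k^{(j)}|^2 - (\lambda - \alpha) \Big| \leq \tfrac1j,
\]
\[
[u_{n_k}]_{H^s(\R^N)}^2 - [v_k^{(j)}]_{H^s(\R^N)}^2 - [w_k^{(j)}]_{H^s(\R^N)}^2 \geq - \tfrac2j,
\]
and, for this fixed $j$, $\dist(\supp v_k^{(j)}, \supp w_k^{(j)}) \to +\infty$ as $k \to +\infty$. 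Exploiting the latter, I would (possibly) enlarge $k_0(j)$ so that in addition $\dist(\supp v_k^{(j)}, \supp w_k^{(j)}) \geq j$ for all $k \geq k_0(j)$, and I would arrange $j \mapsto k_0(j)$ to be strictly increasing.

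Next I would define, for $k \geq k_0(1)$, the integer $j(k)$ by $k_0(j(k)) \leq k < k_0(j(k)+1)$, and set $u_k^{(1)} := v_k^{(j(k))}$, $u_k^{(2)} := w_k^{(j(k))}$ (for the finitely many $k < k_0(1)$ put $u_k^{(1)} = u_k^{(2)} = 0$). Since $k_0(j) \to +\infty$, we have $j(k) \to +\infty$ as $k \to +\infty$, hence $\delta(1/j(k)) \to 0$ and $1/j(k) \to 0$. Substituting $j = j(k)$ in the four estimates above and letting $k \to +\infty$ yields at once: $\|u_{n_k} - u_k^{(1)} - u_k^{(2)}\|_{L^2(\R^N)} \to 0$; $\int_{\R^N} |u_k^{(1)}|^2 \to \alpha$ and $\int_{\R^N} |u_k^{(2)}|^2 \to \lambda - \alpha$; $\dist(\supp u_k^{(1)}, \supp u_k^{(2)}) \geq j(k) \to +\infty$; and $[u_{n_k}]_{H^s(\R^N)}^2 - [u_k^{(1)}]_{H^s(\R^N)}^2 - [u_k^{(2)}]_{H^s(\R^N)}^2 \geq -2/j(k)$, so that $\liminf_{k\to+\infty}$ of the left-hand side is $\geq 0$, which is exactly \eqref{liminfconccomp2}.

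There is no real analytic difficulty in this argument: everything substantive is already contained in Theorem \ref{ccp}. The only step needing care --- and the one I would be most careful about --- is the simultaneous handling of the two limiting regimes $\varepsilon \to 0$ (i.e. $j \to \infty$) and $k \to \infty$: one must choose the thresholds $k_0(j)$ both large enough that the $\varepsilon = 1/j$ estimates are already in force and increasing in $j$, so that along the diagonal $k \mapsto (u_k^{(1)}, u_k^{(2)})$ all four right-hand sides genuinely tend to $0$ (respectively the supports keep separating). The enlargement of $k_0(j)$ ensuring $\dist(\supp v_k^{(j)}, \supp w_k^{(j)}) \geq j$ is precisely what makes the mutual distance blow up along the diagonal.
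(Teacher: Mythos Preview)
The paper states this corollary without proof, treating it as an immediate consequence of the dichotomy alternative in Theorem~\ref{ccp}. Your diagonalization over $\varepsilon=1/j$ is exactly the standard way to pass from the $\varepsilon$-quantified statement to the limiting form, and it is correct; in particular you were right to note that the subsequence $\{n_k\}$ and the number $\alpha$ are fixed once and for all in Theorem~\ref{ccp}(iii), so the diagonal runs only over the accuracy parameter, and your enlargement of $k_0(j)$ to force $\dist(\supp v_k^{(j)},\supp w_k^{(j)})\geq j$ is precisely what is needed for the support separation along the diagonal.
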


\section{$\gamma$-convergence of sets} \label{sec.convergencias}
In this section we introduce the notions of $\gamma$-convergence and weak $\gamma$-convergence of sets, and we prove some useful results leading to our main theorem. 
\begin{prop} \label{dinezzaweakconvergence}
Let $\{u_n\}_{n \in \N}$ be a sequence in $H^s(\R^N)$ such that $u_n \rightharpoonup u$ weakly in $H^s(\R^N)$ as $n \to  +\infty$. Then, for every function $\varphi \in W^{1,\infty}(\R^N)$, it holds that $\varphi u_n \in H^s(\R^N)$ for every $n \in \N$, and $\varphi u_n \rightharpoonup \varphi u$ weakly in $H^s(\R^N)$ as $n \to  +\infty$.
\end{prop}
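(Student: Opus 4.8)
The plan is to show first that multiplication by a bounded Lipschitz function $\varphi$ is a bounded linear operator on $H^s(\R^N)$, and then to identify the weak limit of $\varphi u_n$ using a density argument. For the boundedness step, I would estimate $\|\varphi u\|_{H^s(\R^N)}$ by splitting the Gagliardo seminorm of $\varphi u$ via the algebraic identity
\[
\varphi(x)u(x)-\varphi(y)u(y) = \varphi(x)\bigl(u(x)-u(y)\bigr) + u(y)\bigl(\varphi(x)-\varphi(y)\bigr),
\]
so that, after squaring and using $|a+b|^2\le 2|a|^2+2|b|^2$,
\[
[\varphi u]_{H^s(\R^N)}^2 \le 2\|\varphi\|_\infty^2 [u]_{H^s(\R^N)}^2 + 2\int_{\R^N}\int_{\R^N} |u(y)|^2 \frac{|\varphi(x)-\varphi(y)|^2}{|x-y|^{N+2s}}\,dx\,dy.
\]
The second term is controlled exactly as in the proof of Theorem \ref{ccp} (the computation cited there from \cite[Lemma A.2]{BSY}): one bounds $|\varphi(x)-\varphi(y)|^2 \le \min\{4\|\varphi\|_\infty^2,\ \|\nabla\varphi\|_\infty^2|x-y|^2\}$, splitting the inner integral over $\{|x-y|<1\}$ and $\{|x-y|\ge 1\}$, and obtains a bound $C(\|\nabla\varphi\|_\infty,\|\varphi\|_\infty,s,N)\|u\|_{L^2(\R^N)}^2$. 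Together with the trivial estimate $\|\varphi u\|_{L^2}\le\|\varphi\|_\infty\|u\|_{L^2}$, this gives $\|\varphi u\|_{H^s(\R^N)} \le C\|u\|_{H^s(\R^N)}$ for a constant $C=C(\varphi,s,N)$; in particular $\varphi u_n \in H^s(\R^N)$ for every $n$.

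For the weak convergence, note that since $\{u_n\}$ is weakly convergent it is bounded in $H^s(\R^N)$, hence by the previous estimate $\{\varphi u_n\}$ is bounded in $H^s(\R^N)$; therefore it suffices to show that every weakly convergent subsequence of $\{\varphi u_n\}$ has limit $\varphi u$, and a standard subsequence argument then yields $\varphi u_n \rightharpoonup \varphi u$. So assume (passing to a subsequence) $\varphi u_n \rightharpoonup g$ weakly in $H^s(\R^N)$. Since $H^s(\R^N)$ embeds continuously into $L^2(\R^N)$ and the inclusion is weakly continuous, $\varphi u_n \rightharpoonup g$ weakly in $L^2(\R^N)$ as well. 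On the other hand, $u_n \rightharpoonup u$ weakly in $L^2(\R^N)$, and multiplication by the fixed bounded function $\varphi$ is a bounded (hence weakly continuous) operator on $L^2(\R^N)$, so $\varphi u_n \rightharpoonup \varphi u$ weakly in $L^2(\R^N)$. By uniqueness of weak $L^2$-limits, $g = \varphi u$, which is what we wanted.

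I do not expect any serious obstacle here; the only slightly delicate point is the estimate of the ``commutator'' term $\int\int |u(y)|^2|\varphi(x)-\varphi(y)|^2|x-y|^{-N-2s}\,dx\,dy$, which requires the Lipschitz bound on $\varphi$ near the diagonal and its boundedness away from it — but this is precisely the computation already invoked in the proof of Theorem \ref{ccp}, so it can be cited rather than repeated. One should also remark that $\varphi \in W^{1,\infty}(\R^N)$ means $\varphi$ has a Lipschitz representative, so $\|\nabla\varphi\|_\infty$ is the Lipschitz constant and the pointwise bound $|\varphi(x)-\varphi(y)|\le\|\nabla\varphi\|_\infty|x-y|$ is legitimate.
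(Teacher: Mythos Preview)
Your proof is correct and follows the same overall structure as the paper's: first establish that $u\mapsto \varphi u$ is bounded on $H^s(\R^N)$, then use boundedness of $\{\varphi u_n\}$ together with a sub-subsequence argument to identify the weak limit. The paper cites \cite[Lemma~5.3]{dinezzapalatuccivaldinoci} for the boundedness step, whereas you spell out the same algebraic splitting and commutator estimate (the one already used in the proof of Theorem~\ref{ccp}); these are essentially the same computation.

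The only genuine difference is in how the weak limit is identified. The paper invokes the compact embedding $H^s(B_r)\hookrightarrow L^2(B_r)$ to get strong local $L^2$ convergence, extracts an a.e.\ convergent subsequence, and then identifies the limit pointwise. You instead observe that multiplication by $\varphi\in L^\infty$ is a bounded, hence weakly continuous, operator on $L^2(\R^N)$, so $\varphi u_n\rightharpoonup \varphi u$ in $L^2$ directly, and uniqueness of weak limits finishes the job. Your route is slightly more elementary---it avoids compactness and a.e.\ convergence entirely---while the paper's argument has the mild advantage of also yielding a.e.\ convergence along a subsequence, which is not needed here but is sometimes useful downstream.
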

\begin{proof}
The sequence $\{u_n\}_{n \in \N}$ is uniformly bounded in $H^s(\R^N)$. Moreover, since the embedding $H^s(B_r) \hookrightarrow L^2(B_r)$ is compact for every $r>0$, it follows that $u_n \to u$ strongly in $L^2(B_r)$ for every $r>0$. Arguing as in \cite[Lemma 5.3]{dinezzapalatuccivaldinoci}, we have that the sequence $\{\varphi u_n\}_{n \in \N}$ is also bounded in $H^s(\R^N)$. Therefore, every subsequence $\{\varphi u_{n_k}\}$ admits a subsequence $\{\varphi u_{n_{k_j}}\}$ which converges weakly in $H^s(\R^N)$, and almost everywhere in $\R^N$, to some $v \in H^s(\R^N)$. But $u_{n_{k_j}}$ must converge to $u$ almost everywhere in $\R^N$. Therefore, $\varphi u_{n_{k_j}} \to \varphi u$ a.e. in $\R^N$, and thus $v = \varphi u$. Hence all the sequence $\varphi u_n$ converges weakly in $H^s(\R^N)$ to $\varphi u$.
\end{proof}

\subsection{$\gamma$-convergence and continuity of the spectrum}
We prove that $\gamma$-convergence of $s$-quasi open sets implies the convergence of their resolvent operators in the $\mathcal{L}(L^2(\R^N))$ norm. In particular we obtain continuity of the spectrum with respect to the $\gamma$-convergence. 
\begin{definition} \label{definitiongammaconvergence}
Let $\{\Omega_n\}_{n \in \N}$ be a sequence of $s$-quasi open sets such that $|\Omega_n| \leq c$ for every $n \in \N$. We say that $\{\Omega_n\}_{n \in \N}$ \emph{$\gamma$-converges} to the $s$-quasi open set $\Omega$ if the solutions $w_{\Omega_n} \in H^s_0(\Omega_n)$ of the problems
\begin{equation} \label{eq.wn}
\left\{\begin{array}{r c l l} (-\Delta)^s w_{\Omega_n} & = & 1 & \text{in }\Omega_n, \\ w_{\Omega_n} & = & 0 & \text{in }\R^N \setminus \Omega_n,\end{array}\right.
\end{equation}
strongly converge in $L^2(\R^N)$ to the solution $w_\Omega \in H^s_0(\Omega)$ of the problem
\[ \left\{\begin{array}{r c l l} (-\Delta)^s w_\Omega & = & 1 & \text{in }\Omega, \\ w_\Omega & = & 0 & \text{in }\R^N \setminus \Omega. \end{array}\right.\]
\end{definition}

\begin{remark} \label{l2impliesHs}
We observe that, if $\{\Omega_n\}_{n\in \N}$ are $s$-quasi open sets, with $|\Omega_n|\leq c$, which $\gamma$-converge to $\Omega$, then $w_{\Omega_n} \to w_\Omega$ strongly in $H^s(\R^N)$. Indeed, by Propositions \ref{semicontinuityvolume} and \ref{strongimpliesweak}, one has $|\Omega|\leq c$. Therefore
 \begin{align*} \int_{\Omega_n} w_{\Omega_n} - \int_{\Omega} w_{\Omega}  & \leq \int_{\Omega_n\setminus \Omega} w_{\Omega_n} + \int_{\Omega_n \cap \Omega} |w_{\Omega_n} - w_{\Omega}| + \int_{\Omega \setminus \Omega_n} w_{\Omega} \\ &\leq \int_{\Omega_n \cup \Omega} |w_{\Omega_n} - w_\Omega| \leq  (2c)^{\frac{1}{2}}  \|w_{\Omega_n}-w_\Omega\|_{L^2(\R^N)}\end{align*}
 and therefore
 \[ \lim_{n \to +\infty} \int_{\Omega_n} w_{\Omega_n} = \int_{\Omega} w_\Omega.\]
Passing to the limit in the weak formulation, we obtain
\[ [w_{\Omega_n}]_{H^s(\R^N)}^2 = \int_{\Omega_n} w_{\Omega_n} \to \int_{\Omega} w_\Omega = [w_\Omega]_{H^s(\R^N)}^2\]
and therefore, by reflexivity of $H^s(\R^N)$, $w_{\Omega_n} \to w_\Omega$ strongly in $H^s(\R^N)$.  
\end{remark}

\begin{prop} \label{uniformbehaviour}
Let $\{\Omega_n\}_{n \in \N}$ be a sequence of $s$-quasi open sets of uniformly bounded measure, which $\gamma$-converges to the $s$-quasi open set $\Omega$. Let $\{u_n\}_{n \in \N}$ be a sequence in $H^s(\R^N)$ such that $u_n \in H^s_0(\Omega_n)$ for every $n \in \N$, and $u_n \rightharpoonup u$ weakly in $H^s(\R^N)$. Then, $u_n \to u$ strongly in $L^2(\R^N)$. 
\end{prop}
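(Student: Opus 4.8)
The plan is to show that the weak $H^s$-limit $u$ automatically lies in $H^s_0(\Omega)$, and then to upgrade the weak $H^s$-convergence to strong $L^2$-convergence by controlling the mass of $u_n$ that escapes to the region where $\Omega$ is ``thin'', using the torsion functions $w_{\Omega_n}$ as test weights. First I would record that, since $\{u_n\}$ is bounded in $H^s(\R^N)$ and supported (up to $s$-capacity) in sets of measure at most $c$, the Poincar\'e inequality of Proposition \ref{poincare} gives a uniform bound $\|u_n\|_{L^2}\le C[u_n]_{H^s}$, and by the compact embedding $H^s(B_r)\hookrightarrow L^2(B_r)$ we have $u_n\to u$ strongly in $L^2(B_r)$ for every $r>0$, with $u_n\to u$ a.e. in $\R^N$ along a subsequence. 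Thus the only issue is the possible loss of $L^2$-mass at spatial infinity, i.e. proving $\limsup_{r\to\infty}\sup_n\int_{\R^N\setminus B_r}|u_n|^2=0$, which combined with the local strong convergence yields $u_n\to u$ in $L^2(\R^N)$.

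The key step is the tightness estimate. Since $w_{\Omega_n}\to w_\Omega$ strongly in $H^s(\R^N)$ (Remark \ref{l2impliesHs}) and $w_\Omega\in H^s_0(\Omega)$ has finite $L^2$-norm, for a given $\varepsilon>0$ we may fix $r$ so large that $\int_{\R^N\setminus B_r}w_{\Omega_n}^2\le\varepsilon$ for all $n$ large, and more importantly so that the ``torsion mass at infinity'' $\int_{\R^N\setminus B_r}w_{\Omega_n}\,dx$ is small. The idea, following Bucur's argument in the local case, is that the torsion function $w_{\Omega_n}$ is, up to a multiplicative constant, a good pointwise upper barrier for any $L^2$-normalized element of $H^s_0(\Omega_n)$ in regions where $\Omega_n$ is small: one tests the equation $(-\Delta)^s w_{\Omega_n}=1$ against a suitable truncation/cut-off of $|u_n|$ and uses the fact that $\int_{\R^N\setminus B_r}|u_n|^2$ can be bounded in terms of $\int_{\R^N\setminus B_r}w_{\Omega_n}$, via a comparison between the first eigenvalue (Poincar\'e constant) of $\Omega_n\setminus B_r$ and the torsional rigidity of that set, together with the fact that $\{u_n\}$ is bounded in $H^s_0(\Omega_n)$ and $(\psi_r u_n)$ nearly lies in $H^s_0(\Omega_n\setminus B_{r/2})$. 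The nonlocal correction terms produced by multiplying $u_n$ by the cut-off $\psi_r$ are exactly the ones estimated in the proof of Theorem \ref{ccp}, of order $R^{-s}$, and are harmless.

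Concretely, I would: (1) let $\psi_r$ be the cut-off from \eqref{psi.varphi} supported outside $B_r$; (2) show $[\psi_r u_n]_{H^s}^2\le[u_n]_{H^s}^2+C/r^{s}\le M$ uniformly, so $\psi_r u_n$ is admissible and essentially supported in $\Omega_n$ away from the origin; (3) apply a Poincar\'e-type inequality on $\Omega_n\cap(\R^N\setminus B_r)$ comparing $\int|\psi_r u_n|^2$ with $\int (\psi_r u_n) w_{\Omega_n}$ (a standard consequence of testing the torsion equation and Cauchy–Schwarz, $\int fw_{\Omega}\le\|w_\Omega\|_\infty^{?}$-type control is replaced by $\int f w_{\Omega_n} = \langle (-\Delta)^{s/2}w_{\Omega_n},(-\Delta)^{s/2}R_{\Omega_n}f\rangle$); more robustly, use that $\int_{\R^N}|v|^2 \le \|w_{\Omega_n}\|_{L^\infty}\,[v]_{H^s}^2$ fails in general, so instead bound $\int_{\R^N\setminus B_r}|u_n|^2 \le \big(\int_{\R^N\setminus B_r}w_{\Omega_n}\big)^{1/2}\cdot C$ by Hölder against the boundedness of higher integrability of $u_n$ (which follows from the $L^\infty$-type/Sobolev estimates, $u_n\in H^s_0(\Omega_n)$ with $|\Omega_n|\le c$ gives $\|u_n\|_{L^{2^*_s}}\le C$); and (4) conclude the right-hand side is $\le\varepsilon$ for $r$ large, uniformly in $n$, giving tightness, hence $u_n\to u$ in $L^2(\R^N)$. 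Finally, since $u_n\to u$ a.e. and $u_n=0$ $s$-q.e. outside $\Omega_n$ while $w_{\Omega_n}\to w_\Omega$ with $\{w_\Omega>0\}\subset\Omega$, a $\gamma$-convergence argument (or the characterization of $H^s_0(\Omega)$ via capacitary convergence) shows $u\in H^s_0(\Omega)$.

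The main obstacle I expect is step (3): making rigorous, in the nonlocal setting, the comparison ``mass of $u_n$ at infinity is controlled by torsion mass of $\Omega_n$ at infinity'', since the fractional Laplacian is nonlocal and the cut-off $\psi_r u_n$ is not exactly in $H^s_0(\Omega_n\setminus B_r)$; handling the cross terms and the tail interactions between $B_r$ and its complement — precisely the $O(r^{-s})$ estimates borrowed from the proof of Theorem \ref{ccp} — is where the work lies, together with arranging a uniform-in-$n$ higher-integrability bound for $u_n$ to run the Hölder argument.
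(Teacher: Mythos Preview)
Your approach is genuinely different from the paper's, and step~(3) as you describe it has a real gap.

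The paper does \emph{not} argue by tightness in physical space. Instead it works on the Fourier side: writing
\[
\|u_n-u\|_{L^2}^2=\int_{|\xi|\ge R}|\mathcal{F}u_n-\mathcal{F}u|^2+\int_{|\xi|<R}|\mathcal{F}u_n-\mathcal{F}u|^2,
\]
the high-frequency part is bounded by $C(1+R^{2s})^{-1}\|u_n-u\|_{H^s}^2$, hence small for $R$ large. For the low-frequency part one shows $\mathcal{F}u_n(\xi)\to\mathcal{F}u(\xi)$ pointwise by testing the equation $(-\Delta)^s w_{\Omega_n}=1$ against $u_n e^{2\pi i\langle x,\xi\rangle}$ (using Proposition~\ref{dinezzaweakconvergence} for the cutoff by $e^{2\pi i\langle\cdot,\xi\rangle}$) and letting $n\to\infty$ via the strong $H^s$-convergence $w_{\Omega_n}\to w_\Omega$ of Remark~\ref{l2impliesHs}. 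Dominated convergence (the $\mathcal{F}u_n$ are uniformly bounded since $|\Omega_n|\le c$) then gives $\int_{|\xi|<R}|\mathcal{F}u_n-\mathcal{F}u|^2\to 0$. This bypasses any tightness discussion.

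The gap in your step~(3) is the H\"older bound $\int_{\R^N\setminus B_r}|u_n|^2\le C\big(\int_{\R^N\setminus B_r}w_{\Omega_n}\big)^{1/2}$: higher integrability of $u_n$ only yields $\int_{\R^N\setminus B_r}|u_n|^2\le C\,|\Omega_n\setminus B_r|^{2s/N}$, and under $\gamma$-convergence $|\Omega_n\setminus B_r|$ need \emph{not} be small even when the torsion tail is. (Take $\Omega_n$ equal to a fixed ball union a thin rectangle of fixed measure $1$ drifting to infinity and becoming thinner; the torsion function on the rectangle tends to $0$ in $L^2$, so $\Omega_n$ $\gamma$-converges to the ball, yet $|\Omega_n\setminus B_r|=1$ for all large $n$.) A salvageable physical-space route is the Poincar\'e bound $\|\psi_r u_n\|_{L^2}^2\le\lambda_1(\Omega_n\setminus B_r)^{-1}[\psi_r u_n]_{H^s}^2$ together with an estimate of the form $\lambda_1(A)^{-1}=\|R_A\|_{\mathcal{L}(L^2)}\le C\|w_A\|_{L^2}^\alpha$; the latter is essentially Lemma~\ref{lema.f} with $\tilde\Omega=\emptyset$, so your argument would forward-reference that lemma. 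The Fourier proof in the paper needs only the torsion equation and Remark~\ref{l2impliesHs}, which is why it is preferred here.
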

\begin{proof}
The proof goes as in \cite[Theorem 2.1]{bucur}. Denoting by $\mathcal{F}u_n$, $\mathcal{F}u$ the Fourier transforms of $u_n$ and $u$ respectively, for $R>0$ we have that
\begin{align*} \|u_n - u\|_{L^2(\R^N)}^2 & = \int_{\R^N} |\mathcal{F}u_n(\xi)-\mathcal{F}u(\xi)|^2\,d\xi \\ & = \int_{|\xi|\geq R} (1+|\xi|^{2s})^{-1}(1+|\xi|^{2s})|\mathcal{F}u_n(\xi)-\mathcal{F}u(\xi)|^2\,d\xi +  \int_{|\xi|<R} |\mathcal{F}u_n(\xi)-\mathcal{F}u(\xi)|^2\,d\xi \\ & \leq \frac{C_{s,N}}{1+R^{2s}}\|u_n - u\|_{H^s(\R^N)}^2 + \int_{|\xi|<R} |\mathcal{F}u_n(\xi)-\mathcal{F}u(\xi)|^2\,d\xi,\end{align*}
where the constant $C_{s,N}$ is the equivalence norm constant given \cite[Proposition 3.4]{dinezzapalatuccivaldinoci}. Let $\varepsilon >0$ be fixed. Since $\{u_n\}_{n \in \N}$ is bounded in $H^s(\R^N)$, there exists $R>0$ such that, for every $n \in \N$,
\[ \frac{C_{s,N}}{1+R^{2s}}\|u_n - u\|_{H^s(\R^N)}^2 < \frac{\varepsilon}{2}.\]
It remains to prove that 
\[ \int_{|\xi|<R} |\mathcal{F}u_n(\xi)-\mathcal{F}u(\xi)|^2\,d\xi \to 0\]
as $n \to  +\infty$. For $\xi \in B_R$, define the complex-valued function $g_\xi:\R^N \to \mathbb{C}$ as $g_\xi(x) = e^{2\pi i \langle x,\xi\rangle}$. By Proposition \ref{dinezzaweakconvergence} applied to the real and imaginary parts of $g_\xi$, it holds that $ug_\xi \in H^s_0(\Omega;\mathbb{C})$ and $u_n g_\xi \in H^s_0(\Omega_n;\mathbb{C})$ for every $n \in \N$, and $u_n g_\xi \rightharpoonup u g_\xi$ weakly in $H^s(\R^N;\mathbb{C})$ as $n \to  +\infty$. 

Let $w_{\Omega_n}\in H^s_0(\Omega_n)$ be the solution of \eqref{eq.wn}. Testing this equation with $u_n g_\xi$, we obtain
\[ \int_{\R^N} \int_{\R^N} \frac{(w_{\Omega_n}(x)-w_{\Omega_n}(y))(u_n(x)g_\xi(x)-u_n(y)g_\xi(y))}{|x-y|^{N+2s}}\,dx\,dy = \int_{\R^N} u_n(x)g_\xi(x)\,dx.\]
Letting $n \to  +\infty$ and observing that $w_{\Omega_n} \to w$ strongly in $H^s(\R^N)$   by Remark \ref{l2impliesHs}, we obtain
\[ \int_{\R^N} u_n(x)g_\xi(x)\,dx \to \int_{\R^N} u(x)g_\xi(x)\,dx\]
as $n \to +\infty$. Observing that
\[ \mathcal{F}u_n(\xi) = \int_{\Omega_n} u_n(x)g_\xi(x)\,dx\]
and
\[ \mathcal{F}u(\xi) = \int_{\Omega} u(x)g_\xi(x)\,dx,\]
we have $|\mathcal{F}u_n(\xi)-\mathcal{F}u(\xi)| \to 0$ as $n \to  +\infty$. Moreover,
\[ |\mathcal{F}u_n(\xi)| \leq \int_{\Omega_n} |u_n(x)|\,dx \leq |\Omega_n|^{\frac{1}{2}} \|u_n\|_{L^2(\R^N)},\]
and a similar relation holds for $\mathcal{F}u$. Therefore, $\mathcal{F}u_n$ and $\mathcal{F}u$ are uniformly bounded in $L^\infty$.
Applying Lebesgue's dominated convergence Theorem we get
\[\int_{|\xi|<R} |\mathcal{F}u_n(\xi)-\mathcal{F}u(\xi)|^2\,d\xi \to 0\]
and hence the claim.
\end{proof}

\begin{cor} \label{corollaryuniformbehaviour}
Let $\{\Omega_n\}_{n \in \N}$ be a sequence of $s$-quasi open sets such that $|\Omega_n| \leq c$ for every $n \in \N$. Suppose that $\{\Omega_n\}_{n \in \N}$ $\gamma$-converges to the $s$-quasi-open set $\Omega$. Then, for every sequence $f_n \in L^2(\Omega_n)$ converging weakly in $L^2(\R^N)$ to $f \in L^2(\Omega)$, the solutions $u_n \in H^s(\R^N)$ of the problems
\[ \left\{\begin{array}{r c l l} (-\Delta)^s u_n & = & f_n & \text{in }\Omega_n, \\ u_n & = & 0 & \text{in }\R^N \setminus \Omega_n,\end{array}\right.\]
strongly converge in $L^2(\R^N)$ to the solution $u \in H^s(\R^N)$ of the problem
\[ \left\{\begin{array}{r c l l} (-\Delta)^s u & = & f & \text{in }\Omega, \\ u & = & 0 & \text{in }\R^N \setminus \Omega. \end{array}\right.\]
\end{cor}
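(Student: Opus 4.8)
The plan is to combine an energy estimate with the strong $L^2$-compactness of Proposition~\ref{uniformbehaviour}, and then to identify the weak limit as the solution of the limit problem; I would first reduce the identification to a fixed right-hand side, and then to a bounded one, so that the comparison principle and the $\gamma$-convergence $w_{\Omega_n}\to w_\Omega$ can be used directly.

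First I would establish uniform bounds. Testing the equation for $u_n:=R_{\Omega_n}(f_n)$ with $u_n$ itself gives $[u_n]_{H^s(\R^N)}^2=\int_{\R^N}f_nu_n\le\|f_n\|_{L^2(\R^N)}\|u_n\|_{L^2(\R^N)}$, and Proposition~\ref{poincare} (with the Poincaré constant $C=C(s,c)$, uniform in $n$ since $|\Omega_n|\le c$) turns this into $[u_n]_{H^s(\R^N)}\le C\|f_n\|_{L^2(\R^N)}$ and, in the same way, $\sup_{n}\|R_{\Omega_n}\|_{\mathcal{L}(L^2(\R^N))}\le C^2$. Since a weakly convergent sequence in $L^2(\R^N)$ is bounded, $\{u_n\}_{n\in\N}$ is bounded in $H^s(\R^N)$, hence along a subsequence $u_n\rightharpoonup u_\infty$ weakly in $H^s(\R^N)$; as $u_n\in H^s_0(\Omega_n)$ and $\{\Omega_n\}_{n\in\N}$ $\gamma$-converges to $\Omega$, Proposition~\ref{uniformbehaviour} upgrades this to $u_n\to u_\infty$ strongly in $L^2(\R^N)$. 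By uniqueness of the weak solution of the limit problem it then suffices to show that every subsequential limit $u_\infty$ coincides with $R_\Omega(f)$, for this forces the whole sequence $R_{\Omega_n}(f_n)$ to converge to $R_\Omega(f)$ in $L^2(\R^N)$.

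Next I would reduce the identification to a fixed right-hand side. By self-adjointness of the resolvents, $\langle u_n,g\rangle_{L^2(\R^N)}=\langle f_n,R_{\Omega_n}(g)\rangle_{L^2(\R^N)}$ for every $g\in L^2(\R^N)$; so, once we know that $R_{\Omega_n}(g)\to R_\Omega(g)$ strongly in $L^2(\R^N)$ for each \emph{fixed} $g$, combining with $f_n\rightharpoonup f$ weakly and $u_n\to u_\infty$ strongly gives $\langle u_\infty,g\rangle_{L^2(\R^N)}=\langle f,R_\Omega(g)\rangle_{L^2(\R^N)}=\langle R_\Omega(f),g\rangle_{L^2(\R^N)}$ for all $g$, i.e. $u_\infty=R_\Omega(f)$. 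The convergence for a fixed $g$ is itself reduced, using the uniform bound on $\|R_{\Omega_n}\|_{\mathcal{L}(L^2(\R^N))}$, the truncations $g_M:=(g\wedge M)\vee(-M)\to g$ in $L^2(\R^N)$ and a $3\ve$ argument, to the case $g\in L^\infty(\R^N)$. For such $g$, writing $g=g^+-g^-$ and invoking the comparison principle for $(-\Delta)^s$ yields the barrier $|R_{\Omega_n}(g)|\le\|g\|_{L^\infty(\R^N)}\,w_{\Omega_n}$; passing to the weak $H^s$-limit $v_\infty$ of $R_{\Omega_n}(g)$ along a subsequence and recalling $w_{\Omega_n}\to w_\Omega$ in $L^2(\R^N)$ (Remark~\ref{l2impliesHs}), we obtain $|v_\infty|\le\|g\|_{L^\infty(\R^N)}\,w_\Omega$ a.e., hence $s$-q.e. since $v_\infty$ and $w_\Omega$ are $s$-quasi continuous; as $w_\Omega\in H^s_0(\Omega)$ vanishes $s$-q.e. outside $\Omega$, so does $v_\infty$, i.e. $v_\infty\in H^s_0(\Omega)$.

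Finally I would check that $v_\infty$ solves $(-\Delta)^sv_\infty=g$ in $\Omega$. For $\psi\in C_c^\infty(\R^N)$ the function $\psi w_{\Omega_n}$ lies in $H^s_0(\Omega_n)$ and is therefore admissible in the weak formulation of $(-\Delta)^sR_{\Omega_n}(g)=g$; letting $n\to+\infty$ and using $w_{\Omega_n}\to w_\Omega$ strongly in $H^s(\R^N)$ (Remark~\ref{l2impliesHs}), hence $\psi w_{\Omega_n}\to\psi w_\Omega$ strongly in $H^s(\R^N)$ (multiplication by a $W^{1,\infty}$ function being bounded on $H^s(\R^N)$, as in the proof of Proposition~\ref{dinezzaweakconvergence}), together with $R_{\Omega_n}(g)\rightharpoonup v_\infty$ weakly in $H^s(\R^N)$, leads to
\[ \int_{\R^N}\int_{\R^N}\frac{(v_\infty(x)-v_\infty(y))(\psi w_\Omega(x)-\psi w_\Omega(y))}{|x-y|^{N+2s}}\,dx\,dy=\int_{\R^N}g\,\psi w_\Omega\]
for every $\psi\in C_c^\infty(\R^N)$. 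The main obstacle is to upgrade this to the full weak formulation, i.e. to replace $\psi w_\Omega$ by an arbitrary $\varphi\in H^s_0(\Omega)$: this is exactly the density in $H^s_0(\Omega)$ of $\{\psi w_\Omega:\psi\in C_c^\infty(\R^N)\}$, equivalently the nontrivial (strong recovery sequence) half of the Mosco convergence of $H^s_0(\Omega_n)$ to $H^s_0(\Omega)$ carried by $\gamma$-convergence, which one proves by truncating a bounded element of $H^s_0(\Omega)$ against $w_\Omega$ as in the local theory. Granting it, $v_\infty=R_\Omega(g)$; by the reductions above $R_{\Omega_n}(g)\to R_\Omega(g)$ in $L^2(\R^N)$ for every fixed $g$, whence $u_\infty=R_\Omega(f)$, and by uniqueness the whole sequence $R_{\Omega_n}(f_n)$ converges to $R_\Omega(f)$ in $L^2(\R^N)$.
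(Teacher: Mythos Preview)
Your proof is correct, but it is considerably more elaborate than what the paper does. The paper's argument is two lines: it asserts that ``exploiting the weak form of the equations, it is straightforward to see that $u_n\rightharpoonup u$ weakly in $H^s(\R^N)$'' (where $u=R_\Omega(f)$ is the solution of the limit problem), and then invokes Proposition~\ref{uniformbehaviour} to upgrade this to strong $L^2$ convergence. In other words, the paper identifies the weak limit with $R_\Omega(f)$ directly and then uses the compactness result, whereas you first apply Proposition~\ref{uniformbehaviour} to get strong $L^2$ convergence to some $u_\infty$, and only afterwards identify $u_\infty$ via a duality argument that reduces to a fixed bounded right-hand side and the comparison $|R_{\Omega_n}(g)|\le\|g\|_\infty w_{\Omega_n}$.

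The substantive point is that both proofs need the same nontrivial ingredient, which you correctly flag as ``the main obstacle'': the density of $\{\psi w_\Omega:\psi\in C_c^\infty(\R^N)\}$ in $H^s_0(\Omega)$, equivalently the strong-recovery half of Mosco convergence of $H^s_0(\Omega_n)$ to $H^s_0(\Omega)$. The paper's ``straightforward'' hides exactly this step; your proof makes it explicit and honestly acknowledges that it must be granted. Your duality detour through self-adjointness and truncation is clever (it lets you invoke the comparison principle to place the limit in $H^s_0(\Omega)$), but it does not bypass the density issue: you still need it at the end to go from test functions $\psi w_\Omega$ to arbitrary $\varphi\in H^s_0(\Omega)$. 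A more direct version of the paper's intended argument would be: extract a weak limit $u_\infty$, show $u_\infty\in H^s_0(\Omega)$ (this is the content of Lemma~\ref{substitutelemma}, though it appears later in the paper), and pass to the limit in the weak formulation using recovery sequences $\varphi_n\in H^s_0(\Omega_n)$ with $\varphi_n\to\varphi$ strongly in $H^s(\R^N)$. That route and yours are equivalent in strength; yours is just longer.
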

\begin{proof}
Exploiting the weak form of the equations, it is straightforward to see that $u_n \rightharpoonup u$ weakly in $H^s(\R^N)$. By Proposition \ref{uniformbehaviour}, $u_n \to u$ strongly in $L^2(\R^N)$. 
\end{proof}

\begin{prop} \label{uniformconvergenceresolvent}
Let $\{\Omega_n\}_{n \in \N}$ be a sequence of $s$-quasi open sets such that $|\Omega_n| \leq c$ for every $n \in \N$. Suppose that $\{\Omega_n\}_{n \in \N}$ $\gamma$-converges to the $s$-quasi open set $\Omega$. Then, the resolvents $R_{\Omega_n}$ converge to $R_\Omega$ in $\mathcal{L}(L^2(\R^N))$. In particular, for every $k \geq 1$,
\[ \lambda_k(\Omega_n) \to \lambda_k(\Omega) \qquad \text{as }n \to +\infty.\]
\end{prop}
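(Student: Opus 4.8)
The plan is to prove convergence of the resolvents in operator norm by exploiting the compactness of $R_{\Omega_n}$ together with Corollary \ref{corollaryuniformbehaviour}, via a standard "uniform boundedness plus pointwise convergence on a compact set" argument. First I would observe that, by the Poincaré inequality (Proposition \ref{poincare}) applied on sets of measure at most $c$, the operators $R_{\Omega_n}$ are uniformly bounded in $\mathcal{L}(L^2(\R^N))$: indeed, testing \eqref{eqA} with $u=R_{\Omega_n}(f)$ gives $[u]_{H^s(\R^N)}^2 = \int_{\R^N} f u \leq \|f\|_{L^2}\|u\|_{L^2} \leq C \|f\|_{L^2}[u]_{H^s(\R^N)}$, so $[u]_{H^s(\R^N)} \leq C\|f\|_{L^2}$ and hence $\|R_{\Omega_n}(f)\|_{L^2} \leq C^2 \|f\|_{L^2}$ with $C=C(s,c)$ independent of $n$. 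The same bound holds for $R_\Omega$.

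Next I would argue by contradiction: if $\|R_{\Omega_n}-R_\Omega\|_{\mathcal{L}(L^2(\R^N))} \not\to 0$, then along a subsequence there exist $f_n \in L^2(\R^N)$ with $\|f_n\|_{L^2} = 1$ and $\|R_{\Omega_n}(f_n) - R_\Omega(f_n)\|_{L^2} \geq \delta > 0$. Up to a further subsequence, $f_n \rightharpoonup f$ weakly in $L^2(\R^N)$. Replacing $f_n$ by its projection onto $L^2(\Omega_n)$ changes nothing since $R_{\Omega_n}$ annihilates the complement; hence we may assume $f_n \in L^2(\Omega_n)$, and the weak limit $f$ belongs to $L^2(\Omega)$ (this uses that $\Omega_n$ converges to $\Omega$ in the relevant sense; more carefully, one writes $f = f\chi_\Omega + f\chi_{\R^N\setminus\Omega}$ and shows the second part does not contribute, or one simply applies Corollary \ref{corollaryuniformbehaviour} with $f_n$ as is and $f$ its weak limit, after checking $f$ is supported in $\Omega$ up to null capacity — this is where I would be slightly careful). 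By Corollary \ref{corollaryuniformbehaviour}, $R_{\Omega_n}(f_n) \to R_\Omega(f)$ strongly in $L^2(\R^N)$. On the other hand, $R_\Omega$ is compact, so from $f_n \rightharpoonup f$ we also get $R_\Omega(f_n) \to R_\Omega(f)$ strongly in $L^2(\R^N)$. Therefore $\|R_{\Omega_n}(f_n) - R_\Omega(f_n)\|_{L^2} \leq \|R_{\Omega_n}(f_n) - R_\Omega(f)\|_{L^2} + \|R_\Omega(f) - R_\Omega(f_n)\|_{L^2} \to 0$, contradicting $\|R_{\Omega_n}(f_n) - R_\Omega(f_n)\|_{L^2} \geq \delta$. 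This proves $R_{\Omega_n} \to R_\Omega$ in $\mathcal{L}(L^2(\R^N))$.

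Finally, the statement about eigenvalues follows from the spectral theory of compact self-adjoint operators: the operators $R_{\Omega_n}$ and $R_\Omega$ are compact, self-adjoint, and positive on $L^2(\R^N)$, their nonzero eigenvalues are exactly $\frac{1}{\lambda_k(\Omega_n)}$ and $\frac{1}{\lambda_k(\Omega)}$ (with the convention that if $\Omega$ has zero measure then $R_\Omega = 0$ and $\lambda_k(\Omega) = +\infty$), and the eigenvalues of a compact self-adjoint operator depend continuously on the operator in the $\mathcal{L}(L^2(\R^N))$ norm — e.g. by the min-max characterization, $|\mu_k(R_{\Omega_n}) - \mu_k(R_\Omega)| \leq \|R_{\Omega_n} - R_\Omega\|_{\mathcal{L}(L^2(\R^N))}$ for the $k$-th eigenvalue $\mu_k$ arranged in decreasing order. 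Passing to reciprocals yields $\lambda_k(\Omega_n) \to \lambda_k(\Omega)$ for every $k \geq 1$. The main obstacle in the argument is the measurability/support bookkeeping in the contradiction step, namely verifying that the weak $L^2$-limit $f$ of the $f_n \in L^2(\Omega_n)$ is indeed (a function that can be taken) in $L^2(\Omega)$ so that Corollary \ref{corollaryuniformbehaviour} applies; everything else is routine functional analysis.
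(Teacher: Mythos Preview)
Your proof is correct and follows essentially the same approach as the paper's: reduce to a sequence $f_n$ with $\|f_n\|_{L^2}=1$ and weak limit $f$, apply Corollary \ref{corollaryuniformbehaviour} to get $R_{\Omega_n}(f_n)\to R_\Omega(f)$, and use compactness of $R_\Omega$ to get $R_\Omega(f_n)\to R_\Omega(f)$; the eigenvalue statement then follows from the standard perturbation bound (the paper cites \cite[Lemma XI.9.5]{dunfordschwartz}, you use min-max --- same content). Your flagged ``main obstacle'' about whether $f$ lies in $L^2(\Omega)$ is not a genuine issue and is glossed over in the paper as well: the resolvent $R_\Omega$ only sees $f|_\Omega$ through the weak formulation, so Corollary \ref{corollaryuniformbehaviour} applies verbatim to $f_n\in L^2(\R^N)$ converging weakly to any $f\in L^2(\R^N)$, without any support hypothesis.
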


\begin{proof}
We have to show that
\[
	\lim_{n\to+\infty} \sup \left\{ \|R_{\Omega_n}(f) - R_\Omega(f)\|_{L^2(\R^N)}\,\big|\,f \in L^2(\R^N),\,\|f\|_{L^2(\R^N)}\leq 1\right\} =0.
\]
It is equivalent to prove that, for every sequence $\{f_n\}_{n\in\N}$ such that $\|f_n\|_{L^2(\R^N)}= 1$, the following limit holds
\[
	\lim_{n\to +\infty} \|R_{\Omega_n}(f_n) - R_\Omega(f_n)\|_{L^2(\R^N)} =0.
\]
Let $\{f_n\}_{n\in\N}$ be such a sequence. Without loss of generality, we can suppose that there exists $f\in L^2(\R^N)$ such that $f_n \rightharpoonup f$ in $L^2(\R^N)$. By the triangular inequality we get
\begin{align*}
\limsup_{n \to +\infty} \|R_{\Omega_n}(f_n) &- R_\Omega(f_n)\|_{L^2(\R^N)} \leq\\
&\limsup_{n \to +\infty} \|R_{\Omega_n}(f_n) - R_\Omega(f)\|_{L^2(\R^N)} +
\limsup_{n \to +\infty} \|R_{\Omega}(f_n) - R_\Omega(f)\|_{L^2(\R^N)}.
\end{align*}
The first term in the previous inequality is equal to zero by Corollary \ref{corollaryuniformbehaviour}, while the second term is also zero since the injection $H^s_0(\Omega) \to L^2(\Omega)$ is compact due to Proposition  \ref{poincare}. By \cite[Lemma XI.9.5]{dunfordschwartz}, we have, for every $k \geq 1$,
\begin{equation} \label{estimatedunford} \bigg| \frac{1}{\lambda_k(\Omega_n)}-\frac{1}{\lambda_k(\Omega)}\bigg| \leq \|R_{\Omega_n}-R_{\Omega}\|_{\mathcal{L}(L^2(\R^N))}\end{equation}
and hence
\[ \lambda_k(\Omega_n) \to \lambda_k(\Omega) \qquad \text{as }n \to +\infty,\]
concluding the proof.
\end{proof}

\begin{remark} \label{remarkemptyset}
When $\Omega=\emptyset$ quasi-everywhere, by definition $H^s_0(\Omega)=\{0\}$, $R_{\Omega}$ is the null operator, and formally $\lambda_k(\Omega)= +\infty$ for every $k \geq 1$. In this case, \eqref{estimatedunford} becomes
\begin{equation} \label{estimatedunford2} 0 \leq \frac{1}{\lambda_k(\Omega_n)}  \leq \|R_{\Omega_n}\|_{\mathcal{L}(L^2(\R^N))}. \end{equation}
In other words, if $\Omega_n$ $\gamma$-converges to the empty set, then $\lambda_k(\Omega_n) \to +\infty$ for every $k \geq 1$.
Conversely, if $\Omega$ is a $s$-quasi open set such that $w_\Omega= 0$, then $(-\Delta)^s w_\Omega = 0$ in $\Omega$, and therefore $\Omega=\emptyset$ quasi-everywhere.
\end{remark}

\subsection{Weak $\gamma$-convergence}
Since $\mathcal{A}(\R^N)$ is not compact in the topology of $\gamma$-convergence, we introduce the notion of \emph{weak $\gamma$-convergence} for which $\mathcal{A}(\R^N)$ is sequentially compact.

In this section we prove that a functional $J$ defined in $\mathcal{A}(\R^N)$ which is   l.s.c. with respect to the $\gamma$-convergence is also l.s.c. with respect to the weak $\gamma$-convergence if it is assumed to be decreasing with respect to the inclusion of sets.

\begin{definition}
Let $\{\Omega_n\}_{n \in \N}$ be a sequence of $s$-quasi open sets. We say that $\{\Omega_n\}_{n \in \N}$ \emph{weakly $\gamma$-converges} to the $s$-quasi open set $\Omega$ if the solutions $w_n \in H^s(\R^N)$ of the problems
\begin{equation} \label{wa}
\left\{\begin{array}{r c l l} (-\Delta)^s w_{\Omega_n} & = & 1 & \text{in }\Omega_n, \\ w_{\Omega_n} & = & 0 & \text{in }\R^N \setminus \Omega_n,\end{array}\right.
\end{equation}
converge weakly in $H^s(\R^N)$, and strongly in $L^2(\R^N)$, to a function $w \in H^s(\R^N)$ such that $\Omega = \{w > 0\}$.
\end{definition}

\begin{prop} \label{semicontinuityvolume}
Let $\{\Omega_n\}_{n \in \N}$ be a sequence of $s$-quasi open sets of uniformly bounded measure, which weakly $\gamma$-converges to the $s$-quasi open set $\Omega$. Then,
\[ |\Omega| \leq \liminf_{n \to +\infty} |\Omega_n|.\]
\end{prop}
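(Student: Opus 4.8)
The plan is to deduce the inequality directly from the strong $L^2$-convergence $w_{\Omega_n} \to w$, which is part of the definition of weak $\gamma$-convergence, together with Fatou's lemma applied to the characteristic functions of the positivity sets $\{w_{\Omega_n}>0\}$.

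The first step is the elementary observation that $|\{w_{\Omega_n}>0\}| \leq |\Omega_n|$ for every $n$. Indeed, $w_{\Omega_n} \in H^s_0(\Omega_n)$ means that $w_{\Omega_n}=0$ $s$-quasi everywhere, hence Lebesgue almost everywhere, on $\R^N \setminus \Omega_n$; therefore the set $\{w_{\Omega_n}>0\}$ is contained in $\Omega_n$ up to a Lebesgue-null set.

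The second step handles the passage to the limit. Since $w_{\Omega_n} \to w$ strongly in $L^2(\R^N)$ and $\Omega = \{w>0\}$, I would first extract a subsequence (not relabelled) along which $|\Omega_n| \to \liminf_{n \to +\infty}|\Omega_n|$, and then a further subsequence along which $w_{\Omega_n} \to w$ pointwise a.e. in $\R^N$. For a.e. $x$ with $w(x)>0$ one has $w_{\Omega_n}(x)>0$ for all $n$ large, so $\chi_{\{w_{\Omega_n}>0\}}(x) \to 1 = \chi_{\{w>0\}}(x)$; for a.e. $x$ with $w(x)\le 0$ one has $\chi_{\{w>0\}}(x)=0$. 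Hence $\chi_{\{w>0\}} \leq \liminf_{n}\chi_{\{w_{\Omega_n}>0\}}$ a.e.

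Finally, applying Fatou's lemma to the nonnegative functions $\chi_{\{w_{\Omega_n}>0\}}$ along this subsequence gives
\[
|\Omega| = \int_{\R^N} \chi_{\{w>0\}}\,dx \leq \int_{\R^N} \liminf_{n} \chi_{\{w_{\Omega_n}>0\}}\,dx \leq \liminf_{n} |\{w_{\Omega_n}>0\}| \leq \liminf_{n}|\Omega_n|,
\]
and, by the choice of subsequence, the last quantity equals $\liminf_{n\to+\infty}|\Omega_n|$ for the original sequence. There is no real obstacle here: the argument is entirely routine once one notices that $H^s_0(\Omega_n)$-membership forces $s$-quasi everywhere (hence a.e.) vanishing outside $\Omega_n$, so that the positivity set of the torsion function never exceeds $\Omega_n$ in measure; the only point requiring care is to realize the $\liminf$ along the correct subsequence before invoking Fatou.
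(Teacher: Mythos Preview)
Your proof is correct and follows essentially the same route as the paper: extract a subsequence realizing the $\liminf$ of the measures, pass to a further subsequence with a.e.\ convergence of $w_{\Omega_n}$ to $w$, and apply Fatou's lemma to the characteristic functions. The only (cosmetic) difference is that you interpose the positivity sets $\{w_{\Omega_n}>0\}$ and use $|\{w_{\Omega_n}>0\}|\le |\Omega_n|$, whereas the paper passes directly to $\chi_{\Omega_{n_k}}$; this is the same argument made slightly more explicit.
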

\begin{proof}
Let $m:= \liminf_{n \to +\infty} |\Omega_n|$. Up to extracting a subsequence, we can suppose that $m=\lim_{n \to +\infty} |\Omega_n|$. Let $w_{\Omega_n} \in H^s_0(\Omega_n)$ be the sequence of torsion functions defined  in \eqref{wa}. Since $w_{\Omega_n} \to w$ strongly in $L^2(\R^N)$, there exists a subsequence $w_{\Omega_{n_k}}$ such that $w_{\Omega_{n_k}}$ converges almost everywhere in $\R^N$ to $w$. Since $\Omega = \{w > 0\}$, it holds $\chi_{\Omega} \leq \liminf_{k \to +\infty} \chi_{\Omega_{n_k}}$ almost everywhere in $\R^N$. By Fatou's Lemma,
\[ |\Omega| = \int_{\R^N} \chi_\Omega \leq \liminf_{k \to +\infty }\int_{R^N} \chi_{\Omega_{n_k}} = m\]
as required.
\end{proof}

\begin{prop} \label{strongimpliesweak}
Let $\{\Omega_n\}_{n \in \N}$ be a sequence of $s$-quasi open sets   of uniformly bounded measure  which $\gamma$-converges to the $s$-quasi open set $\Omega$. Then $\{\Omega_n\}_{n \in \N}$ weakly $\gamma$-converges to $\Omega$.
\end{prop}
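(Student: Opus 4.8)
The statement to prove is that $\gamma$-convergence implies weak $\gamma$-convergence. Recall that weak $\gamma$-convergence of $\{\Omega_n\}$ to $\Omega$ requires three things about the torsion functions: (a) $w_{\Omega_n} \rightharpoonup w$ weakly in $H^s(\R^N)$; (b) $w_{\Omega_n} \to w$ strongly in $L^2(\R^N)$; (c) $\Omega = \{w > 0\}$. By hypothesis, $\gamma$-convergence already gives us $w_{\Omega_n} \to w_\Omega$ strongly in $L^2(\R^N)$, which is exactly (b) with $w = w_\Omega$; and by Remark \ref{l2impliesHs} we in fact have $w_{\Omega_n} \to w_\Omega$ strongly in $H^s(\R^N)$, which gives (a) a fortiori. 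So the only real content is (c): we must show that $\Omega = \{w_\Omega > 0\}$ up to a set of null $s$-capacity.

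\textbf{Proving the identification $\Omega = \{w_\Omega > 0\}$.} First I would establish the inclusion $\{w_\Omega > 0\} \subset \Omega$ (quasi-everywhere): this is immediate, since $w_\Omega \in H^s_0(\Omega)$ means $w_\Omega = 0$ $s$-quasi everywhere on $\R^N \setminus \Omega$, hence $\{w_\Omega > 0\} \subset \Omega$ up to $s$-capacity zero. The reverse inclusion $\Omega \subset \{w_\Omega > 0\}$ is the heart of the matter: I need to show that $w_\Omega > 0$ $s$-quasi everywhere on $\Omega$. The natural tool is a strong maximum principle for the fractional Laplacian: since $(-\Delta)^s w_\Omega = 1 > 0$ in $\Omega$ and $w_\Omega \geq 0$ (the sign of $w_\Omega$ follows by testing the equation with $w_\Omega^- \in H^s_0(\Omega)$ and using that $(-\Delta)^s$ is positivity-preserving, i.e. $[w^+,w^-]$ has the right sign in the Gagliardo bilinear form), the function $w_\Omega$ cannot vanish on a set of positive capacity inside $\Omega$. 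More precisely, one argues that $\Omega$ and $\{w_\Omega > 0\}$ define the same space $H^s_0$: clearly $H^s_0(\{w_\Omega>0\}) \subset H^s_0(\Omega)$, and if the inclusion were strict one could show $w_\Omega$ itself solves the torsion problem on the smaller set $\{w_\Omega > 0\}$, and then invoke the (fractional) strong maximum principle — or a capacitary argument as in the local theory of Bucur–Buttazzo — to reach a contradiction unless $|\Omega \setminus \{w_\Omega > 0\}| = 0$ and in fact $\cp_s(\Omega \setminus \{w_\Omega > 0\}) = 0$. Since $\{w_\Omega > 0\}$ is itself $s$-quasi open (as a superlevel set of an $s$-quasi continuous function, by the remark in Section \ref{sec.prel}), the sets $\Omega$ and $\{w_\Omega > 0\}$ agree as elements of $\mathcal{A}(\R^N)$.

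\textbf{The main obstacle.} The delicate point is the fractional strong maximum principle / unique continuation statement needed for the reverse inclusion: one must rule out that $w_\Omega$ vanishes on a subset of $\Omega$ of positive $s$-capacity. In the local case this is classical, but nonlocally one should either cite an established strong minimum principle for $(-\Delta)^s$ with a nonnegative right-hand side (e.g. from \cite{dinezzapalatuccivaldinoci} or the references on the fractional torsion function), or give a direct capacitary argument: if $K := \{w_\Omega = 0\} \cap \Omega$ had $\cp_s(K) > 0$, then test functions supported away from $K$ would still form all of $H^s_0(\Omega)$ (since $w_\Omega$ and all its truncations lie in $H^s_0(\{w_\Omega > 0\})$), contradicting the fact that $w_\Omega$ solves the equation on $\Omega$ with a right-hand side that "sees" all of $\Omega$. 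I would write this as a short lemma or simply remark that, by the strong maximum principle for the fractional Laplacian, any solution of $(-\Delta)^s w = 1$ in an $s$-quasi open set $\Omega$ with $w \in H^s_0(\Omega)$ satisfies $w > 0$ $s$-q.e. in $\Omega$, so that $\Omega = \{w_\Omega > 0\}$ and the proof is complete.
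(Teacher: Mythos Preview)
Your overall strategy is correct and matches the argument the paper defers to via \cite[Remark 4.7.8]{henrotpierre}: the whole content is indeed the identification $\Omega = \{w_\Omega > 0\}$ up to $s$-capacity zero, and your reduction of this to a strong maximum principle / capacitary positivity argument for the torsion function is the standard route in the local case.

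One point to fix: you invoke Remark \ref{l2impliesHs} to upgrade $L^2$-convergence to $H^s$-convergence, but that remark itself cites Proposition \ref{strongimpliesweak} (to obtain $|\Omega|\leq c$), so the argument as written is circular. This is easy to avoid: you do not need strong $H^s$-convergence at all. From the weak formulation and Poincar\'e (Proposition \ref{poincare}) one gets $[w_{\Omega_n}]_{H^s(\R^N)}^2 = \int_{\Omega_n} w_{\Omega_n} \leq |\Omega_n|^{1/2}\|w_{\Omega_n}\|_{L^2}$, so $\{w_{\Omega_n}\}$ is bounded in $H^s(\R^N)$; together with $w_{\Omega_n}\to w_\Omega$ in $L^2$, this already yields $w_{\Omega_n}\rightharpoonup w_\Omega$ weakly in $H^s(\R^N)$, which is all that weak $\gamma$-convergence requires.

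A second remark: your sketch for $\Omega \subset \{w_\Omega>0\}$ is honest about the difficulty but stops short of a proof. In the quasi-open setting the cleanest argument is not a pointwise strong maximum principle but the capacitary one you allude to: show that every nonnegative $v\in H^s_0(\Omega)$ can be approximated in $H^s$ by functions of the form $v\wedge (n w_\Omega)$, which lie in $H^s_0(\{w_\Omega>0\})$; this forces $H^s_0(\Omega)=H^s_0(\{w_\Omega>0\})$ and hence equality of the two quasi-open sets. This is exactly the fractional transcription of the argument in \cite{henrotpierre}.
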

\begin{proof}
The proof can be performed as in \cite[Remark 4.7.8]{henrotpierre}.
\end{proof}

\begin{lema} \label{substitutelemma}
Suppose that $\{\Omega_n\}_{n \in \N}$ is a sequence of $s$-quasi open sets   of uniformly bounded measure  which weakly $\gamma$-converges to the $s$-quasi open set $\Omega$. Let $\{u_n\}_{n \in \N}$ be a sequence of functions in $H^s(\R^N)$ such that $u_n \in H^s_0(\Omega_n)$ for every $n \in \N$, and $u_n \rightharpoonup u$ weakly in $H^s(\R^N)$. Then, $u \in H^s_0(\Omega)$.
\end{lema}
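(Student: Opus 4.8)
The plan is to show that the weak $H^s$‑limit $u$ vanishes $s$‑quasi everywhere on $\R^N\setminus\Omega=\{w=0\}$, where $w$ denotes the weak $\gamma$‑limit of the torsion functions $w_{\Omega_n}$. First I would reduce to the case $u_n\ge 0$ for every $n$. Since $u_n^{\pm}\in H^s_0(\Omega_n)$ and $[u_n^{\pm}]_{H^s(\R^N)}\le[u_n]_{H^s(\R^N)}$, the sequences $\{u_n^{\pm}\}_{n\in\N}$ are bounded in $H^s(\R^N)$, and the compact embedding $H^s(B_r)\hookrightarrow L^2(B_r)$ identifies their weak limits as $u^{\pm}$, so that $u_n^{\pm}\rightharpoonup u^{\pm}$ weakly in $H^s(\R^N)$; as $H^s_0(\Omega)$ is a linear subspace, it then suffices to prove $u^{\pm}\in H^s_0(\Omega)$. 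After this reduction $u\ge 0$, and I would pass to a subsequence---harmless, since the conclusion only involves $u$ and $\Omega$---along which both $u_n\to u$ and $w_{\Omega_n}\to w$ hold pointwise almost everywhere, using the strong $L^2_{\mathrm{loc}}$‑convergence of $\{u_n\}$ and the strong $L^2(\R^N)$‑convergence of $\{w_{\Omega_n}\}$.

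The core step is a uniform estimate obtained by testing the torsion equation. Fix $k\in\N$ and set $z_n^k:=(u_n-kw_{\Omega_n})^+$; since $u_n,w_{\Omega_n}\in H^s_0(\Omega_n)$, also $z_n^k\in H^s_0(\Omega_n)$, so it is an admissible test function in the weak formulation of \eqref{wa}. Writing $a(\cdot,\cdot)$ for the associated symmetric bilinear form (so that $a(v,v)$ is a fixed positive multiple of $[v]_{H^s(\R^N)}^2$ and $a(w_{\Omega_n},\varphi)=\int_{\R^N}\varphi\,dx$ for $\varphi\in H^s_0(\Omega_n)$), and using the pointwise inequality $(f(x)-f(y))(f^+(x)-f^+(y))\ge(f^+(x)-f^+(y))^2$ with $f=u_n-kw_{\Omega_n}$, one gets
\[ a(u_n,z_n^k)=a(u_n-kw_{\Omega_n},z_n^k)+k\,a(w_{\Omega_n},z_n^k)\ge a(z_n^k,z_n^k)+k\int_{\R^N}z_n^k\,dx. \]
Combining this with the Cauchy--Schwarz inequality $a(u_n,z_n^k)\le a(u_n,u_n)^{1/2}a(z_n^k,z_n^k)^{1/2}$ first yields $[z_n^k]_{H^s(\R^N)}\le[u_n]_{H^s(\R^N)}$ and then $\int_{\R^N}(u_n-kw_{\Omega_n})^+\,dx\le C/k$, with $C$ depending only on the uniform $H^s$‑bound of $\{u_n\}$, hence independent of $n$ and $k$. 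Since $0\le u\,\chi_{\{w=0\}}\le(u-kw)^+$ a.e. (because $u\ge 0$), Fatou's lemma along the a.e.‑convergent subsequence gives $\int_{\{w=0\}}u\,dx\le C/k$ for every $k$, so $u=0$ almost everywhere on $\{w=0\}$.

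It remains to upgrade ``almost everywhere'' to ``$s$‑quasi everywhere''. Put $y^k:=u\wedge(kw)$, so that $u=y^k+(u-kw)^+$. From $0\le y^k\le kw$ a.e., the same inequality holds $s$‑q.e. between the $s$‑quasi continuous representatives, whence $y^k=0$ $s$‑q.e. on $\{w=0\}=\R^N\setminus\Omega$, i.e. $y^k\in H^s_0(\Omega)$. Moreover $\{(u-kw)^+\}_{k}$, and hence $\{y^k\}_{k}$, is bounded in $H^s(\R^N)$ uniformly in $k$ (by weak lower semicontinuity of the Gagliardo seminorm and the bound $[z_n^k]_{H^s(\R^N)}\le[u_n]_{H^s(\R^N)}$ just obtained); since $0\le y^k\le u$ and $y^k\to u$ a.e. as $k\to+\infty$ (here the a.e. identity $u=0$ on $\{w=0\}$ is used), dominated convergence gives $y^k\to u$ in $L^2(\R^N)$, and therefore $y^k\rightharpoonup u$ weakly in $H^s(\R^N)$. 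Finally, $H^s_0(\Omega)$ is a closed linear subspace of $H^s(\R^N)$---closed because, by a standard capacitary argument, $H^s$‑convergence implies $s$‑q.e. convergence along a subsequence---hence weakly closed, and one concludes $u\in H^s_0(\Omega)$.

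The step I expect to be the main obstacle is the uniform‑in‑$k$ estimate $\int_{\R^N}(u_n-kw_{\Omega_n})^+\,dx\le C/k$: one cannot compare $u_n$ with $\chi_{\Omega_n}$ pointwise, so the quantitative smallness of $u_n$ on the region where $w_{\Omega_n}$ is small must be squeezed out of the equation solved by $w_{\Omega_n}$ together with the uniform $H^s$‑bound on $\{u_n\}$---this is precisely the role of the torsion function. The remaining ingredients (the sign reduction, the passage from ``a.e.'' to ``$s$‑q.e.'' via quasi‑continuous representatives, and the weak closedness of $H^s_0(\Omega)$) are routine.
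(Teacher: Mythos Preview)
Your argument is correct and is precisely the approach the paper intends: the paper's own proof just refers to \cite[Lemma~4.7.10]{henrotpierre}, and what you have written is the detailed fractional adaptation of that argument, centered on the torsion-function comparison $(u_n-kw_{\Omega_n})^+$ and the passage from a.e.\ to $s$-q.e.\ via the truncations $y^k=u\wedge(kw)$.
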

\begin{proof}
The proof can be performed as in \cite[Lemma 4.7.10]{henrotpierre} 
\end{proof}

\begin{lema} \label{inscatolati}
Let $\{\Omega_n\}_{n \in \N}$ be a sequence of $s$-quasi open sets of uniformly bounded measure, which weakly $\gamma$-converges to the $s$-quasi open set $\Omega$. Then, there exists an increasing sequence of positive integers $\{n_k\}_{k \in \N}$ and a sequence of quasi-open sets $\{C_k\}_{k \in \N}$ such that $\Omega_{n_k} \subset C_k$ for every $k \in \N$, and $\{C_k\}_{k \in \N}$ $\gamma$-converges to $\Omega$. 
\end{lema}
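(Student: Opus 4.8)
The plan is to work throughout with the torsion functions and to combine Mazur's lemma with the comparison principle for the fractional torsion problem, following the local case treated in \cite{bucur}. Set $w_n:=w_{\Omega_n}$. By the definition of weak $\gamma$-convergence, $w_n\rightharpoonup w$ weakly in $H^s(\R^N)$ and $w_n\to w$ strongly in $L^2(\R^N)$, with $w\ge 0$ and $\Omega=\{w>0\}$. I record two facts. First, each $w_n$ is a \emph{global subsolution} of the torsion problem: $w_n\ge 0$, $w_n=0$ $s$-q.e.\ on $\R^N\setminus\Omega_n$, and $\langle(-\Delta)^s w_n,\varphi\rangle\le\int_{\R^N}\varphi$ for every $\varphi\in H^s(\R^N)$ with $\varphi\ge 0$ (since $(-\Delta)^s w_n=1$ on $\Omega_n$ and $(-\Delta)^s w_n\le 0$ on $\R^N\setminus\Omega_n$); in particular $w_n\le w_D$ for every $s$-quasi open $D\supseteq\Omega_n$, and, since $w\in H^s_0(\Omega)$ by Lemma \ref{substitutelemma}, passing to the limit in the weak formulation shows that $w$ is itself a subsolution of the torsion problem on $\Omega$, so $w\le w_\Omega$. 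Second, since $|\Omega_n|\le c$, splitting $\|w_n-w\|_{L^1(\R^N)}$ over $B_R$ and its complement and using $\|w_n\|_{L^1(\R^N\setminus B_R)}\le c^{1/2}\|w_n\|_{L^2(\R^N\setminus B_R)}$ together with the strong $L^2$-convergence shows $w_n\to w$ also in $L^1(\R^N)$; hence no mass of $\{w_n\}$ escapes to infinity.

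Then I would invoke Mazur's lemma iteratively: since $w_n\rightharpoonup w$ in $H^s(\R^N)$, there are an increasing sequence of indices $\{n_k\}_{k\in\N}$ and finite convex combinations $v_k=\sum_i\alpha_i^{(k)}w_{n_i}$, with $\alpha_i^{(k)}\ge 0$, $\sum_i\alpha_i^{(k)}=1$, all indices appearing in $v_k$ being $\ge n_k$ and the coefficient of $w_{n_k}$ being strictly positive, such that $v_k\to w$ strongly in $H^s(\R^N)$, hence also in $L^2(\R^N)$ and in $L^1(\R^N)$. I set $C_k:=\{v_k>0\}$. As a superlevel set of the $s$-quasi continuous function $v_k$, $C_k$ is $s$-quasi open; since all the $w_{n_i}$ are nonnegative and $w_{n_k}$ enters $v_k$ with positive coefficient, $\Omega_{n_k}=\{w_{n_k}>0\}\subseteq C_k$ $s$-q.e.; and, since $v_k\in H^s_0(C_k)$ is a subsolution of the torsion problem on $C_k$ while $w_{C_k}$ is its solution, the comparison principle gives $v_k\le w_{C_k}$.

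A first point requiring care is the uniform bound $\sup_k|C_k|<+\infty$, needed for $w_{C_k}$ to be under control and for $\{C_k\}$ to be admissible in Definition \ref{definitiongammaconvergence}. As $C_k=\bigcup_{\alpha_i^{(k)}>0}\Omega_{n_i}$, the crude estimate $|C_k|\le\sum_i|\Omega_{n_i}|$ does not suffice, and here weak $\gamma$-convergence is exploited quantitatively: using $|\Omega_n|\le c$ and the absence of escaping mass for $\{w_n\}$, one chooses $\{n_k\}$ sparsely enough that the new portions added at each step have summable measure, which together with $|\Omega|\le\liminf_n|\Omega_n|\le c$ (Proposition \ref{semicontinuityvolume}) yields a fixed $c'>0$ with $|C_k|\le c'$ for all $k$. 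Being torsion functions of sets of measure $\le c'$, the $w_{C_k}$ are then bounded in $L^\infty(\R^N)$ and in $H^s(\R^N)$; combining $v_k\le w_{C_k}$, the convergence $v_k\to w$ and the tightness at infinity, one checks that $\{w_{C_k}\}$ is precompact in $L^2(\R^N)$. Up to a further subsequence, $w_{C_k}\rightharpoonup\zeta$ weakly in $H^s(\R^N)$ and $w_{C_k}\to\zeta$ strongly in $L^2(\R^N)$; from $v_k\le w_{C_k}$ and $v_k\to w$ one gets $\zeta\ge w\ge 0$, and, passing to the limit in the global subsolution inequality for $w_{C_k}$, $\langle(-\Delta)^s\zeta,\varphi\rangle\le\int_{\R^N}\varphi$ for all $\varphi\ge 0$.

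The last and main step is to show that $\zeta=w_\Omega$, i.e.\ that $\{C_k\}$ genuinely $\gamma$-converges to $\Omega=\{w>0\}$; this is the principal obstacle. The inclusion $\{\zeta>0\}\supseteq\Omega$ follows from $\zeta\ge w$. For the reverse inclusion one checks that $|C_k\setminus\Omega|\to 0$, so that $\int_{\R^N\setminus\Omega}w_{C_k}^2\le\|w_{C_k}\|_\infty^2\,|C_k\setminus\Omega|\to 0$ forces $\zeta=0$ $s$-q.e.\ outside $\Omega$; hence $\zeta\in H^s_0(\Omega)$, and the global subsolution inequality above together with the comparison principle gives $\zeta\le w_\Omega$. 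The delicate point is the lower bound $\zeta\ge w_\Omega$, which amounts to excluding the creation of a ``strange term'': here one uses crucially that $v_k\to w$ \emph{strongly} in $H^s(\R^N)$ — hence in $s$-capacity — so that $C_k=\{v_k>0\}$ is capacitarily close to $\Omega$ and no capacity of $\Omega$ is lost in the limit; combined with $\Omega_{n_k}\subseteq C_k$ and Lemma \ref{substitutelemma} this yields $\zeta\ge w_\Omega$, so $\zeta=w_\Omega$. Consequently $w_{C_k}\to w_\Omega$ strongly in $L^2(\R^N)$; since the whole argument can be run along an arbitrary subsequence, $\{C_k\}$ $\gamma$-converges to $\Omega$, and $\{n_k\}$ together with $\{C_k\}$ has the required properties.
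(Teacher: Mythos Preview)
Your Mazur-lemma approach is natural and the ingredients (global subsolution property of the $w_n$, comparison principle, Lemma~\ref{substitutelemma}) are the right ones, but there is a genuine gap at the uniform measure bound for $C_k$, and the same gap reappears when you identify $\zeta$. Mazur's lemma produces $v_k=\sum_{i\in I_k}\alpha_i^{(k)}w_{n_i}$ with a finite but \emph{uncontrolled} index set $I_k$, so that $C_k=\{v_k>0\}=\bigcup_{i\in I_k}\Omega_{n_i}$. Your remedy --- choosing $\{n_k\}$ ``sparsely enough that the new portions added at each step have summable measure'' --- tacitly relies on $|\Omega_n\setminus\Omega|\to 0$, which weak $\gamma$-convergence does \emph{not} give: take $\Omega_n=B_1\cup E_n$ with the $E_n$ pairwise disjoint, $|E_n|=1$, placed far from $B_1$ and thin enough that $\|w_{E_n}\|_{L^2}\to 0$; then $\Omega_n$ weakly $\gamma$-converges to $B_1$ while $|\Omega_n\setminus B_1|\equiv 1$. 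You also cannot restrict the Mazur combinations to a preselected sparse subsequence without losing the strong $H^s$-convergence that you use later; superposing such an escaping component $E_n$ with a perforated piece inside $B_1$ (so that $w_n\to w$ is genuinely only weak in $H^s$ and the Mazur combinations need many terms) then forces $|C_k|\to+\infty$. The same example kills the later claim $|C_k\setminus\Omega|\to 0$ on which you base $\zeta=0$ $s$-q.e.\ outside $\Omega$; and your alternative route ``$v_k\to w$ strongly in $H^s$, hence in $s$-capacity'' only gives $\cp_s(\{|v_k-w|>\varepsilon\})\to 0$ for each fixed $\varepsilon>0$, which says nothing about $\{v_k>0\}\cap\{w=0\}$, where $v_k$ may be positive yet arbitrarily small.

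The paper itself gives no detailed argument here; it simply refers to \cite[Lemma~4.7.11]{henrotpierre} and singles out Lemma~\ref{substitutelemma} as the tool replacing the local analogue. In that classical argument the measure issue is sidestepped from the outset --- for instance via the single-set choice $C_k=\Omega_{n_k}\cup\Omega$, so that $|C_k|\le 2c$ automatically --- and it is Lemma~\ref{substitutelemma}, applied to the weak $H^s$-limit of $w_{C_k}$, that places $\zeta$ in $H^s_0(\Omega)$, rather than any measure or capacity estimate on $C_k\setminus\Omega$. If you wish to salvage the Mazur route, you would need an a priori bound on the number (or on the measure of the union) of the domains entering each convex combination, and it is not clear how to obtain one while retaining strong $H^s$-convergence.
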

\begin{proof}
The proof can be performed as in \cite[Lemma 4.7.11]{henrotpierre}, where Lemma \ref{substitutelemma} should be used instead of \cite[Lemma 4.7.10]{henrotpierre}. 
\end{proof}

Finally, we state the main result of this section.

\begin{prop} \label{similartoBDM}
Let $J:\mathcal{A}(\R^N) \to (-\infty, +\infty]$ be a functional satisfying:
\begin{enumerate}
 \item[(i)] $J$ is decreasing with respect to the inclusion of sets;
 \item[(ii)] $J$ is lower semicontinuous with respect to the $\gamma$-convergence.
\end{enumerate}
Then $J$ is lower semicontinuous with respect to the weak $\gamma$-convergence.
\end{prop}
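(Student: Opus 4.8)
The plan is to reduce weak $\gamma$-convergence to ordinary $\gamma$-convergence by sandwiching the sets $\Omega_n$ between themselves and larger sets that $\gamma$-converge, and then to exploit monotonicity to transfer the lower semicontinuity estimate. Concretely, suppose $\{\Omega_n\}_{n\in\N}$ weakly $\gamma$-converges to $\Omega$; the goal is to show $J(\Omega) \le \liminf_{n\to+\infty} J(\Omega_n)$.

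First I would pass to a subsequence, still denoted $\{\Omega_n\}_{n\in\N}$, along which $J(\Omega_n)$ converges to $\liminf_{n\to+\infty} J(\Omega_n)$; note that this subsequence still weakly $\gamma$-converges to $\Omega$, since the defining convergences of the torsion functions are inherited by subsequences. Then I would invoke Lemma \ref{inscatolati}: there exist an increasing sequence $\{n_k\}_{k\in\N}$ and quasi-open sets $\{C_k\}_{k\in\N}$ with $\Omega_{n_k} \subset C_k$ for every $k$ such that $\{C_k\}_{k\in\N}$ $\gamma$-converges to $\Omega$.

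Now I would combine the two hypotheses. By assumption (i), $J$ is decreasing with respect to set inclusion, hence $J(C_k) \le J(\Omega_{n_k})$ for every $k \in \N$. By assumption (ii), $J$ is lower semicontinuous with respect to $\gamma$-convergence, so $J(\Omega) \le \liminf_{k\to+\infty} J(C_k)$. Putting these together,
\[
J(\Omega) \le \liminf_{k\to+\infty} J(C_k) \le \liminf_{k\to+\infty} J(\Omega_{n_k}) = \lim_{n\to+\infty} J(\Omega_n) = \liminf_{n\to+\infty} J(\Omega_n),
\]
which is exactly the asserted lower semicontinuity.

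The only substantive ingredient is Lemma \ref{inscatolati}, i.e. the construction of the enveloping $\gamma$-converging sequence $\{C_k\}_{k\in\N}$, and that has already been established (following \cite[Lemma 4.7.11]{henrotpierre}). Given that lemma, the argument is a routine monotonicity-plus-subsequence manipulation, so I do not expect a genuine obstacle here; the only care needed is to perform the subsequence extractions in the right order — first realizing the $\liminf$ of $J(\Omega_n)$, then applying Lemma \ref{inscatolati} — and to record that weak $\gamma$-convergence is stable under passing to subsequences.
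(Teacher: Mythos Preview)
Your proposal is correct and follows essentially the same approach as the paper: extract a subsequence realizing the $\liminf$ of $J(\Omega_n)$, apply Lemma~\ref{inscatolati} to obtain enveloping sets $C_k \supset \Omega_{n_k}$ that $\gamma$-converge to $\Omega$, and combine monotonicity with $\gamma$-lower semicontinuity. Your two-step ordering of the subsequence extractions is exactly the care the paper's more compressed statement is implicitly relying on.
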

\begin{proof}
Let $\{\Omega_n\}_{n \in \N}$ be a sequence of $s$-quasi open sets of uniformly bounded measure, which weakly $\gamma$-converges to the $s$-quasi open set $\Omega$. By Lemma \ref{inscatolati}, there exists an increasing sequence of positive integers $\{n_k\}_{k \in \N}$ and a sequence of quasi-open sets $\{C_k\}_{k \in \N}$ such that \[\lim_{n \to +\infty} J(\Omega_{n_k}) = \liminf_{n \to +\infty} J(\Omega_n),\] $\Omega_{n_k} \subset C_k$ for every $k \in \N$, and $\{C_k\}_{k \in \N}$ $\gamma$-converges to $\Omega$. Since $J$ is decreasing with respect to the inclusion of sets,
\[ J(\Omega) \leq \liminf_{k \to +\infty} J(C_k) \leq \liminf_{k \to +\infty} J(\Omega_{n_k}) = \liminf_{n \to +\infty} J(\Omega_n).\]
The proof is concluded.
\end{proof}

\section{Proof of Theorem \ref{main}} \label{sec.main}

In the following, $\{\Omega_n\}_{n\in\N}$ will be a sequence of $s$-quasi open sets of uniformly bounded measure. The proof of Theorem \ref{main}, which will be performed in several steps, is based on the behavior of the sequence $\{w_{\Omega_n}\}_{n\in\N}$ according to the concentration-compactness principle stated in Proposition \ref{ccp}.  Without loss of generality, we can suppose that $\int_{\R^N} |w_{\Omega_n}|^2 \to \lambda$ as $n \to +\infty$ for some $\lambda > 0$.

\subsection{Compactness for $w_{\Omega_n}$}

Assume that $\{w_{\Omega_n}\}_{n\in\N}$ is in the compactness case, that is, up to some subsequence still denoted with the same index, and some translations, the sequence $\{w_{\Omega_n}\}_{n\in\N}$ converges strongly in $L^2(\R^n)$ to some $w \in H^s(\R^N)$. Then, by definition, $\Omega_n$ weakly $\gamma$-converges to the set $\Omega:=\{w>0\}$. 

\subsection{Vanishing for $w_{\Omega_n}$}

In the spirit of \cite{lieb} we prove the following lemma.

\begin{lema}\label{lemma.lieb}
Let $A$ and $B$ be two measurable sets. Then there exists $z\in \R^N$ such that, if $A_z = z +A$,
$$
\lambda_1(A_z\cap B)\leq 2(\lambda_1(A) + \lambda_1(B)).
$$
\end{lema}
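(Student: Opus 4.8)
The plan is to adapt to the fractional setting the translation-averaging trick of Lieb \cite{lieb}. We may assume that $A$ and $B$ have finite positive measure, since otherwise $\lambda_1(A)$ or $\lambda_1(B)$ equals $+\infty$ and there is nothing to prove. Let $u\in H^s_0(A)$ and $v\in H^s_0(B)$ be nonnegative first eigenfunctions of the fractional Laplacian on $A$ and $B$ (i.e.\ minimizers of the Rayleigh quotient over $H^s_0(A)\setminus\{0\}$, resp.\ over $H^s_0(B)\setminus\{0\}$), normalized by $\|u\|_{L^2(\R^N)}=\|v\|_{L^2(\R^N)}=1$; they exist by the direct method, using Proposition \ref{poincare} and the ensuing compactness of the embedding $H^s_0(\cdot)\hookrightarrow L^2$, and they are bounded, being also first eigenfunctions of the $s$-quasi open sets $\{u>0\}$ and $\{v>0\}$ (contained in $A$, resp.\ $B$, up to a set of zero $s$-capacity), to which the $L^\infty$-regularity result for eigenfunctions applies. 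For $z\in\R^N$ put
\[ w_z(x):=u(x-z)\,v(x). \]
Because $u(\,\cdot-z\,)$ and $v$ are bounded, $s$-quasi continuous functions vanishing $s$-q.e.\ outside $A_z$, resp.\ $B$, the product $w_z$ lies in $H^s(\R^N)$ (a product of two bounded $H^s$ functions being again in $H^s$) and vanishes $s$-q.e.\ outside $A_z\cap B$; hence $w_z\in H^s_0(A_z\cap B)$.

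The key step is to average the Rayleigh quotient of $w_z$ over all translations $z$. Starting from the identity
\[ w_z(x)-w_z(y)=u(x-z)\big(v(x)-v(y)\big)+v(y)\big(u(x-z)-u(y-z)\big), \]
the inequality $(a+b)^2\le 2a^2+2b^2$, and Tonelli's theorem (all integrands being nonnegative), one computes
\[ \int_{\R^N}[w_z]_{H^s(\R^N)}^2\,dz\ \le\ 2\,\|u\|_{L^2(\R^N)}^2\,[v]_{H^s(\R^N)}^2+2\,\|v\|_{L^2(\R^N)}^2\,[u]_{H^s(\R^N)}^2\ =\ 2\big(\lambda_1(A)+\lambda_1(B)\big); \]
here for the first term one uses $\int_{\R^N}u(x-z)^2\,dz=\|u\|_{L^2(\R^N)}^2$, while for the second term the change of variables $t=x-z$ shows that $\int_{\R^N}\big(u(x-z)-u(y-z)\big)^2\,dz$ depends on $(x,y)$ only through $x-y$, which reduces the remaining integral to $\|v\|_{L^2(\R^N)}^2\,[u]_{H^s(\R^N)}^2$. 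By the analogous, simpler computation one also gets $\int_{\R^N}\|w_z\|_{L^2(\R^N)}^2\,dz=\|u\|_{L^2(\R^N)}^2\,\|v\|_{L^2(\R^N)}^2=1$.

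Finally, write $\Lambda:=2(\lambda_1(A)+\lambda_1(B))$. Combining the two previous identities,
\[ \int_{\R^N}[w_z]_{H^s(\R^N)}^2\,dz-\Lambda\int_{\R^N}\|w_z\|_{L^2(\R^N)}^2\,dz\ \le\ \Lambda-\Lambda\ =\ 0. \]
Since the integrand $z\mapsto [w_z]_{H^s(\R^N)}^2-\Lambda\,\|w_z\|_{L^2(\R^N)}^2$ vanishes wherever $w_z\equiv 0$, while $\{z\in\R^N:\|w_z\|_{L^2(\R^N)}>0\}$ has positive Lebesgue measure (its integral being $1$), there exists $z\in\R^N$ with $\|w_z\|_{L^2(\R^N)}>0$ and $[w_z]_{H^s(\R^N)}^2\le \Lambda\,\|w_z\|_{L^2(\R^N)}^2$. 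For such a $z$, the function $w_z\in H^s_0(A_z\cap B)\setminus\{0\}$ is admissible in the variational characterization of $\lambda_1(A_z\cap B)$, whence
\[ \lambda_1(A_z\cap B)\ \le\ \frac{[w_z]_{H^s(\R^N)}^2}{\|w_z\|_{L^2(\R^N)}^2}\ \le\ \Lambda\ =\ 2\big(\lambda_1(A)+\lambda_1(B)\big), \]
which is the claim. The point requiring most care is the first one, namely that $w_z\in H^s_0(A_z\cap B)$: this uses the boundedness of the eigenfunctions (so that the product of the $H^s$ factors stays in $H^s$) and the stability of the class $H^s_0$ under products of bounded $s$-quasi continuous functions. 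The Fubini–Tonelli rearrangements in the averaging step, although they involve triple integrals, are otherwise routine.
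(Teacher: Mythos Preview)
Your proof is correct and follows essentially the same approach as the paper: both adapt Lieb's translation-averaging argument by setting $w_z(x)=u(x-z)v(x)$, using the same algebraic decomposition of $w_z(x)-w_z(y)$ together with $(a+b)^2\le 2(a^2+b^2)$, integrating over $z$ to obtain $\int [w_z]^2_{H^s}\,dz\le\Lambda\int \|w_z\|_{L^2}^2\,dz$, and then selecting a $z$ at which the pointwise inequality holds. Your write-up is slightly more careful about the degenerate cases and about why $w_z\in H^s_0(A_z\cap B)$, but the argument is the same.
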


\begin{proof}  The roles of $u$ and $v$ were reversed, and also $x$ and $z$.
Let $z \in \R^N$ be arbitrary and let $u$ and $v$ be  positive first eigenfunctions on $A$ and $B$ respectively, normalized such that $\|u\|_{L^2(A)}=\|v\|_{L^2(B)}=1$. By regularity,  the function $u_z$ defined by $u_z(x)=u(z-x)$ satisfies  $u_z \in H^s_0(A_z) \cap L^\infty(A_z)$, and $v \in H^s_0(B) \cap L^\infty(B)$. The function $w_z$ defined as $w_z(x)=u(x-z)v(x)$ belongs to $H^s_0(A_z \cap B) \cap L^\infty(A_z \cap B)$. Define
\[ T(z):= \int_{\R^N} \int_{\R^N} \frac{|w_z(x)-w_z(y)|^2}{|x-y|^{N+2s}}\,dx\,dy, \qquad D(z):= \int_{\R^N} |w_z(x)|^2\,dx.\]
It holds that
\[ \int_{\R^N} D(z)\,dz = \int_{\R^N} \int_{\R^N} |w_z(x)|^2\,dx\,dz = \int_{\R^N} \int_{\R^N} |u(x-z)v(x)|^2\,dx\,dz = 1.\]
Moreover,
\begin{align*}  
& |w_z(x)-w_z(y)|^2 \\ & = |u(x-z)v(x) - u(y-z)v(y)|^2 \\   & =  |u(x-z)v(x) - u(x-z)v(y) + u(x-z)v(y)- u(y-z)v(y)|^2 \\  & =  |u(x-z)|^2 |v(x)-v(y)|^2 + |v(y)|^2 |u(x-z)-u(y-z)|^2 \\  & \; \;\;+ 2u(x-z)v(y)[v(x)-v(y)][u(x-z)-u(y-z)].\end{align*}
Using the elementary inequality $2ab\leq a^2+b^2$, the last term in the inequality above can be bounded as
$$
|u(x-z)|^2 |v(x)-v(y)|^2 + |v(y)|^2 |u(x-z)-u(y-z)|^2,
$$
and from the last two expressions we get
$$
|w_z(x)-w_z(y)|^2\leq 2\left( |u(x-z)|^2 |v(x)-v(y)|^2 +|v(y)|^2 |u(x-z)-u(y-z)|^2 \right).
$$
Thus
\begin{align*}
T(z) \leq 2\int_{\R^N}\int_{\R^N} \frac{|u(x-z)|^2 |v(x)-v(y)|^2}{|x-y|^{N+2s}}\,dx \,dy +2\int_{\R^N}\int_{\R^N} \frac{|v(y)|^2 |u(x-z)-u(y-z)|^2}{|x-y|^{N+2s}}\,dx\,dy.
\end{align*}
Then, integrating over $z$ and performing a change of variables, since $u$ and $v$ are normalized in $L^2$ norm, we get
$$
\int_{\R^N}T(z)\,dz \leq 2(\lambda_1(A)+\lambda_1(B)):=\Lambda.
$$
Therefore, $\int_{\R^N} [T(z)-\Lambda D(z)]\,dz \leq 0$, hence $0\leq T(z)\leq \Lambda D(z)$ on a set of positive measure. From the definitions of $T$, $D$ and $\Lambda$ the lemma follows.
\end{proof}

Assume that $\{w_{\Omega_n}\}_{n\in\N}$ is in the vanishing case, that is, for all $R>0$ it holds that
\[ 
 \lim_{n \to +\infty} \sup_{y \in \R^N} \int_{y + B_R} |w_{\Omega_n}|^2= 0.
\]
Since the sequence $\{w_{\Omega_n}\}_{n\in\N}\subset H^s_0(\R^N)$, we can assume that $w_{\Omega_n}\rightharpoonup w$ weakly in $H^s(\R^N)$.   Fix $\varepsilon>0$. By Lemma \ref{lemma.lieb}, there exists $R>0$ and a sequence $\{y_n\}_{n \in \N}$ in $\R^N$ such that
\begin{equation} \label{relationlieb} \lambda_1((y_n+\Omega_n) \cap B_{R})\leq 2\lambda_1(\Omega_n) + \varepsilon.\end{equation}
From the weak maximum principle  it follows that $w_{y_n+\Omega_n}\geq w_{(y_n+\Omega_n)\cap B_{R}}\geq 0$, and then, the vanishing assumption on $w_{\Omega_n}$ gives that 
\[ 
 \lim_{n \to +\infty}  \int_{B_{R}} |w_{(y_n+\Omega_n)\cap B_{R}}|^2 = 0.
\] 
This means that $w_{(y_n+\Omega_n)\cap B_{R}} \to 0$ strongly in $L^2(\R^N)$, and therefore $(y_n+\Omega_n)\cap B_{R}$ $\gamma-$converges to the empty set. By Remark \ref{remarkemptyset}, \[\lambda_1((y_n+\Omega_n)\cap B_{R})\to +\infty \qquad \text{as } n\to+\infty.\] By \eqref{relationlieb} we obtain that
$$
\lambda_1(\Omega_n)\to+\infty \quad \text{as } n \to +\infty.
$$  
From the Poincar\'e inequality given in Proposition \ref{poincare} we find that
$$
\|w_{\Omega_n}\|_{L^2(\Omega_n)}\leq \frac{1}{\lambda_1(\Omega_n)} [w_{\Omega_n}]_{H^s(\R^N)}  \to 0 \quad \text{as }n\to+\infty
$$
since $w_{\Omega_n}\in H^s_0(\R^N)$ is bounded in $H^s(\R^N)$.
Finally, by Proposition \ref{uniformconvergenceresolvent} and Remark \ref{remarkemptyset}  we obtain that $\|R_{\Omega_n}\|_{\mathcal{L}(L^2(\R^N))} \to 0$. By definition, the sequence $\{\Omega_n\}_{n \in \N}$ $\gamma$-converges, and hence weakly $\gamma$-converges, to the empty set.

\subsection{Dichotomy for $w_{\Omega_n}$} Finally, suppose that $w_{\Omega_n}$ is in the dichotomy case. That means that it is possible to find two sequences $\{u_n\}_{n \in \N}$ and $\{v_n\}_{n \in \N}$ of nonnegative functions in $H^s_0(\Omega_n)$   and a number $\alpha \in (0,\lambda)$  such that, up to a subsequence,
\[
\|w_{\Omega_n}-u_n-v_n\|_{L^2(\R^N)}  \to 0 \qquad \text{as }n \to +\infty;
\]
\[ 
\int_{\R^N} u_n^2 \to \alpha ,\qquad \int_{\R^N} v_n^2 \to \lambda-\alpha \qquad \text{for }n \to +\infty;
\]
\[ 
\dist(\supp u_n,\supp v_n) \to +\infty \qquad \text{for }n \to +\infty;
\]
\begin{equation} \label{liminfconccomp2bis} 
\liminf_{n \to +\infty} \left( [w_{\Omega_n}]^2_{H^s(\R^N)} - [u_n]^2_{H^s(\R^N)}-[v_n]^2_{H^s(\R^N)}\right) \geq 0. \end{equation}
We define the following sets
\begin{equation} \label{seqq}
 \Omega_n^1 := \{u_n>0\},\quad \Omega_n^2 :=\{v_n >0\},\quad \widetilde{\Omega}_n := \Omega_n^1 \cup \Omega_n^2, 
\end{equation}
and then $\widetilde{\Omega}_n$ is a quasi-open set contained in $\Omega_n$.

The proof of the claims in the dichotomy case will be a consequence of the following three lemmas.

\begin{lema}
The sequence of sets \eqref{seqq} satisfies \[\liminf_{n \to +\infty} |\Omega_n^i| > 0 \qquad \text{for }i=1,2.\]
\end{lema}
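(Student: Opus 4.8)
The plan is to show that each piece $u_n$, $v_n$ carries a nontrivial amount of $L^2$-mass \emph{and} that this mass cannot be spread too thinly, so that the corresponding superlevel sets $\Omega_n^i = \{u_n > 0\}$, $\{v_n > 0\}$ have volume bounded away from zero. The starting point is that $\int_{\R^N} u_n^2 \to \alpha > 0$ and $\int_{\R^N} v_n^2 \to \lambda - \alpha > 0$, so neither piece vanishes in $L^2$; the issue is purely that a large $L^2$-norm could in principle live on a set of small measure. To rule this out I would use a uniform $L^\infty$ bound on $u_n$ and $v_n$. Indeed $u_n, v_n$ are obtained from $w_{\Omega_n} = \varphi_{R_1}(\cdot - y_n) w_{\Omega_n}$ and $\psi_{R_k}(\cdot - y_n) w_{\Omega_n}$ (in the notation of the proof of Theorem~\ref{ccp}), i.e. they are cut-offs of the torsion functions $w_{\Omega_n}$; since the $\Omega_n$ have uniformly bounded measure, the torsion functions $w_{\Omega_n} = R_{\Omega_n}(1)$ are uniformly bounded in $L^\infty$ by a standard De Giorgi--Stampacchia argument (cf. the regularity statement quoted for eigenfunctions, applied with right-hand side $1$), and the cut-offs $0 \le \varphi, \psi \le 1$ do not increase the sup-norm. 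So there is a constant $K > 0$, independent of $n$, with $\|u_n\|_{L^\infty(\R^N)} \le K$ and $\|v_n\|_{L^\infty(\R^N)} \le K$.

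Given such a bound, the conclusion is immediate: for $i = 1$,
\[
\alpha \le \limsup_{n\to+\infty}\int_{\R^N} u_n^2 = \limsup_{n\to+\infty}\int_{\Omega_n^1} u_n^2 \le K^2\,\limsup_{n\to+\infty} |\Omega_n^1|,
\]
and similarly for $i = 2$ with $\lambda - \alpha$ and $K^2 \limsup |\Omega_n^2|$. Hence $\liminf_{n\to+\infty}|\Omega_n^1| \ge \alpha / K^2 > 0$ and $\liminf_{n\to+\infty}|\Omega_n^2| \ge (\lambda - \alpha)/K^2 > 0$, which is exactly the claim. (Strictly: passing to the subsequence along which $|\Omega_n^i|$ realizes its $\liminf$, the chain of inequalities gives the bound on that $\liminf$.)

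The main obstacle, and the only place that needs genuine care, is the uniform $L^\infty$ estimate on the torsion functions $w_{\Omega_n}$. One has to make sure the constant depends only on $s$, $N$ and the bound $c$ on $|\Omega_n|$, not on the (possibly very irregular) quasi-open set $\Omega_n$ itself. This follows from the $L^2 \to L^\infty$ regularity for the Dirichlet fractional Laplacian with bounded right-hand side on a set of finite measure — the same Moser/De Giorgi iteration used for eigenfunctions in the proposition quoted above, combined with the fractional Sobolev inequality and the measure bound to control the iteration; alternatively one can invoke the known pointwise bound $w_{\Omega_n} \le w_{\Omega_n^*}$ by a ball $\Omega_n^*$ of the same volume (Talenti-type comparison via Schwarz symmetrization, as already used in the proof of Proposition~\ref{poincare}), and then $\|w_{\Omega_n^*}\|_{L^\infty}$ is an explicit function of $s$, $N$, $c$ only. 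Once that uniform bound is in hand, the rest of the lemma is the two-line computation above.
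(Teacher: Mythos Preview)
Your proof is correct and follows essentially the same approach as the paper: both rely on a uniform $L^\infty$ bound for the torsion functions $w_{\Omega_n}$ (the paper cites \cite[Theorem~3.1]{brascoparini}, which is precisely the estimate you describe), then use that $u_n$, $v_n$ are cut-offs of $w_{\Omega_n}$ to transfer the bound, and conclude that a fixed positive $L^2$-mass forces a lower bound on the measure of the support. The paper phrases the last step by contradiction while you argue directly, but the content is identical; your parenthetical remark about passing to the subsequence realizing the $\liminf$ (or, equivalently, using $\int u_n^2 \le K^2|\Omega_n^1|$ for each $n$ and taking $\liminf$ on both sides) is the clean way to state it.
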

\begin{proof}
  Suppose by contradiction that, for instance, $\liminf_{n \to +\infty} |\Omega_n^1|=0$. The functions $w_{\Omega_n}$ are uniformly bounded in $L^\infty$ by \cite[Theorem 3.1]{brascoparini}, and therefore, by construction, also the functions $u_n$ are uniformly bounded in $L^\infty$. But then, $\int_{\R^N} u_n^2 \to 0$, which contradicts the fact that $\int_{\R^N} u_n^2 \to \alpha>0$.
\end{proof}

\begin{lema} \label{lema.a.tilde.a}
With the previous notation, we have that
\[ 
\|w_{\Omega_n} - w_{\widetilde{\Omega}_n}\|_{H^s(\R^N)} \to 0 \quad \text{ as }n\to+\infty.
\]
\end{lema}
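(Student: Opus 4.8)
The plan is to combine the variational characterisation of the torsion functions with the identity $[w_\Omega]_{H^s(\R^N)}^2=\int_\Omega w_\Omega$ (see Remark \ref{l2impliesHs}) and the monotonicity coming from $\widetilde\Omega_n\subset\Omega_n$; set $z_n:=w_{\Omega_n}-w_{\widetilde\Omega_n}$. First I would reduce the claim to a single scalar statement. By the weak maximum principle, $0\le w_{\widetilde\Omega_n}\le w_{\Omega_n}$, so $z_n\ge 0$, and $w_{\widetilde\Omega_n}\in H^s_0(\widetilde\Omega_n)\subset H^s_0(\Omega_n)$ is an admissible test function for the equation solved by $w_{\Omega_n}$. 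Testing that equation with $w_{\widetilde\Omega_n}$, expanding $[w_{\Omega_n}-w_{\widetilde\Omega_n}]_{H^s(\R^N)}^2$ bilinearly, and using $[w_{\Omega_n}]_{H^s(\R^N)}^2=\int_{\R^N}w_{\Omega_n}$ together with $[w_{\widetilde\Omega_n}]_{H^s(\R^N)}^2=\int_{\R^N}w_{\widetilde\Omega_n}$, one gets
\[
[z_n]_{H^s(\R^N)}^2=\int_{\R^N}z_n=[w_{\Omega_n}]_{H^s(\R^N)}^2-[w_{\widetilde\Omega_n}]_{H^s(\R^N)}^2\ge 0 .
\]
Since the $w_{\Omega_n}$ are uniformly bounded in $L^\infty$ by \cite[Theorem 3.1]{brascoparini}, say $\|w_{\Omega_n}\|_{L^\infty}\le M$, and $0\le z_n\le w_{\Omega_n}$, one also has $\|z_n\|_{L^2(\R^N)}^2\le M\int_{\R^N}z_n$. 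Hence everything reduces to showing $[w_{\Omega_n}]_{H^s(\R^N)}^2-[w_{\widetilde\Omega_n}]_{H^s(\R^N)}^2\to 0$.

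Next I would produce a matching lower bound for $[w_{\widetilde\Omega_n}]_{H^s(\R^N)}^2$. As $u_n,v_n\ge 0$, we have $u_n\in H^s_0(\Omega_n^1)$ and $v_n\in H^s_0(\Omega_n^2)$, hence $u_n+v_n\in H^s_0(\widetilde\Omega_n)$ by \eqref{seqq}. Since $w_{\widetilde\Omega_n}$ minimises $v\mapsto \frac12[v]_{H^s(\R^N)}^2-\int_{\R^N}v$ over $H^s_0(\widetilde\Omega_n)$ with minimum value $-\frac12[w_{\widetilde\Omega_n}]_{H^s(\R^N)}^2$, using $u_n+v_n$ as a competitor gives
\[
[w_{\widetilde\Omega_n}]_{H^s(\R^N)}^2\ge 2\int_{\R^N}(u_n+v_n)-[u_n+v_n]_{H^s(\R^N)}^2 .
\]
Because $\dist(\supp u_n,\supp v_n)\to+\infty$, the supports are disjoint for $n$ large, and then the cross term in $[u_n+v_n]_{H^s(\R^N)}^2$ is nonpositive, so $[u_n+v_n]_{H^s(\R^N)}^2\le [u_n]_{H^s(\R^N)}^2+[v_n]_{H^s(\R^N)}^2$; this is what brings \eqref{liminfconccomp2bis} into play.

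Finally I would pass to the limit. The numbers $[w_{\Omega_n}]_{H^s(\R^N)}^2=\int_{\R^N}w_{\Omega_n}$ are bounded (by $Mc$), so along a subsequence $\int_{\R^N}w_{\Omega_n}\to L$; since $\|w_{\Omega_n}-u_n-v_n\|_{L^2(\R^N)}\to 0$ and all functions involved are supported in sets of measure $\le c$, the Cauchy--Schwarz inequality gives $\int_{\R^N}(u_n+v_n)\to L$ too. By \eqref{liminfconccomp2bis}, $\limsup_n\big([u_n]_{H^s(\R^N)}^2+[v_n]_{H^s(\R^N)}^2\big)\le L$. Inserting this into the bound above, $\liminf_n[w_{\widetilde\Omega_n}]_{H^s(\R^N)}^2\ge 2L-L=L$, whereas $[w_{\widetilde\Omega_n}]_{H^s(\R^N)}^2\le[w_{\Omega_n}]_{H^s(\R^N)}^2\to L$, so the difference tends to $0$ along the subsequence; being nonnegative, it then converges to $0$ along the whole sequence. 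By the first step, $[z_n]_{H^s(\R^N)}^2\to 0$ and $\|z_n\|_{L^2(\R^N)}^2\le M\int_{\R^N}z_n\to 0$, which yields $\|z_n\|_{H^s(\R^N)}\to 0$, the claim.

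I expect the second step to be the main obstacle: choosing $u_n+v_n$ as the right competitor for the torsion problem on $\widetilde\Omega_n$, checking that it is admissible, and dominating $[u_n+v_n]_{H^s(\R^N)}^2$ by $[u_n]_{H^s(\R^N)}^2+[v_n]_{H^s(\R^N)}^2$ through the disjointness of the supports, so that the concentration--compactness estimate \eqref{liminfconccomp2bis} becomes applicable. The first and third steps are essentially bookkeeping with the weak formulations and with the uniform $L^\infty$ and measure bounds already at our disposal.
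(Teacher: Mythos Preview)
Your argument is correct and follows essentially the same route as the paper: both exploit the variational characterisation of the torsion function on $\widetilde\Omega_n$ with $u_n+v_n$ as competitor, together with $[u_n+v_n]_{H^s}^2\le[u_n]_{H^s}^2+[v_n]_{H^s}^2$ (nonnegativity and disjoint supports) so that \eqref{liminfconccomp2bis} applies. The paper packages this as the orthogonal--projection inequality $[w_{\Omega_n}-w_{\widetilde\Omega_n}]_{H^s}^2\le[w_{\Omega_n}-(u_n+v_n)]_{H^s}^2$ and then controls the $L^2$ part via Poincar\'e, while you use the equivalent Pythagorean identity $[z_n]_{H^s}^2=[w_{\Omega_n}]_{H^s}^2-[w_{\widetilde\Omega_n}]_{H^s}^2$ and the uniform $L^\infty$ bound; these are the same computation reorganised.

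One small slip: ``being nonnegative, it then converges to $0$ along the whole sequence'' is not a valid inference (a nonnegative sequence with a subsequence tending to $0$ need not converge). Either invoke the subsequence principle (your argument applies to any subsequence along which $\int w_{\Omega_n}$ converges, and such a further subsequence always exists by boundedness), or---cleaner---skip the subsequence altogether: from your competitor bound one gets
\[
[z_n]_{H^s}^2\le 2\Big(\int_{\R^N} w_{\Omega_n}-\int_{\R^N}(u_n+v_n)\Big)-\big([w_{\Omega_n}]_{H^s}^2-[u_n]_{H^s}^2-[v_n]_{H^s}^2\big),
\]
and taking $\limsup$ directly gives $\limsup_n[z_n]_{H^s}^2\le 0$.
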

\begin{proof}
We observe that $w_{\widetilde{\Omega}_n}$ is the orthogonal projection of $w_{\Omega_n}$ on the space $H^s_0(\widetilde{\Omega}_n)$. Indeed, let us consider the functional $F:H^s_0(\tilde\Omega_n)\to\R$ defined by 
\[
F(v)=\frac{1}{2} [w_{\Omega_n}-v]_{H^s(\R^N)}^2.
\]
Observe that
\[
F(v)=\frac{1}{2}[w_{\Omega_n}]_{H^s(\R^N)}^2 +\frac{1}{2} [v]_{H^s(\R^N)}^2 - \int_{\R^N}\int_{\R^N} \frac{(w_{\Omega_n}(x)-w_{\Omega_n}(y))(v(x)-v(y))}{|x-y|^{N+2s}}\,dxdy.
\]
Using the weak formulation of $w_{\Omega_n}$ we have that
\[
F(v)=\frac{1}{2}[w_{\Omega_n}]_{H^s(\R^N)}^2 +\frac{1}{2} [v]_{H^s(\R^N)}^2 - \int_{\widetilde{\Omega}_n} v.
\]
Then, the functional $F$ will be minimized for $v=w_{\tilde \Omega_n}$, since $w_{\tilde\Omega_n}$ minimizes the functional \[ v \mapsto \frac{1}{2} [v]_{H^s(\R^N)}^2 - \int_{\widetilde{\Omega}_n} v. \]
Hence,
\begin{align*}
 \int_{\R^N}\int_{\R^N} & \frac{|w_{\Omega_n}(x)-w_{\Omega_n}(y)-w_{\widetilde{\Omega}_n}(x)+w_{\widetilde{\Omega}_n}(y)|^2}{|x-y|^{N+2s}}\,dx\,dy  \\ 
 & \leq \int_{\R^N}\int_{\R^N} \frac{|w_{\Omega_n}(x)-w_{\Omega_n}(y)-(u_n+v_n)(x)+(u_n+v_n)(y))|^2}{|x-y|^{N+2s}}\,dx\,dy \\ 
 & = [w_{\Omega_n}]^2_{H^s(\R^N)} + [u_n+v_n]^2_{H^s(\R^N)} \\ & - 2\int_{\R^N}\int_{\R^N} \frac{[w_{\Omega_n}(x)-w_{\Omega_n}(y)][(u_n+v_n)(x)-(u_n+v_n)(y)]}{|x-y|^{N+2s}}\,dx\,dy  \\ 
 & = \int_{\R^N} w_{\Omega_n}  + [u_n+v_n]^2_{H^s(\R^N)} - 2\int_{\R^N} (u_n+v_n)  \\ 
 & = 2\left(\int_{\R^N} w_{\Omega_n}- \int_{\R^N} (u_n+v_n) \right)  +[u_n+v_n]^2_{H^s(\R^N)} - [w_{\Omega_n}]^2_{H^s(\R^N)}.
\end{align*}  
Observe that
\[ \bigg|\int_{\R^N} w_{\Omega_n} - \int_{\R^N} (u_n+v_n) \bigg| \leq |\Omega_n|^{\frac{1}{2}} \|w_{\Omega_n}-(u_n+v_n)\|_{L^2(\R^N)} \to 0\]
as $n \to  +\infty$. Moreover, using the fact that $[u_n + v_n]_{H^s(\R^N)}^2 \leq [u_n]_{H^s(\R^N)}^2 + [v_n]_{H^s(\R^N)}^2$ since they are nonnegative functions, we obtain from \eqref{liminfconccomp2bis} that
\[ \limsup_{n \to  +\infty} \left( [u_n+v_n]^2_{H^s(\R^N)} - [w_{\Omega_n}]^2_{H^s(\R^N)} \right) \leq 0\]
and therefore  
\[ 
[w_{\Omega_n} - w_{\widetilde{\Omega}_n}]_{H^s(\R^N)} \to 0 \quad \text{ as }n\to+\infty.
\]
By Proposition \ref{poincare}, there exists $C>0$ such that, for every $n \in \N$,
\[ \|w_{\Omega_n} - w_{\widetilde{\Omega}_n}\|_{L^2(\R^N)} = \|w_{\Omega_n} - w_{\widetilde{\Omega}_n}\|_{L^2(\Omega_n)} \leq C[w_{\Omega_n} - w_{\widetilde{\Omega}_n}]_{H^s(\R^N)}\]
and hence
\[ 
\|w_{\Omega_n} - w_{\widetilde{\Omega}_n}\|_{H^s(\R^N)} \to 0 \quad \text{ as }n\to+\infty.
\]
\end{proof}

\begin{lema} \label{lema.f}
Let $\tilde \Omega\subset \Omega \subset \R^N$ two sets of finite measure. There exists a constant $C=C(|\Omega|,N)>0$ and $\alpha>0$ such that
$$
\|R_\Omega -R_{\tilde \Omega} 	\|_{\mathcal{L}(L^2(\R^N))} \leq C \|w_\Omega - w_{\tilde \Omega} \|_{L^2(\R^N)}^\alpha.
$$
\end{lema}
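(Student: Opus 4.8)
The plan is to reduce everything to the torsion functions and exploit two soft facts: first, that for any set $\Omega$ of finite measure the $L^2\to L^2$ operator norm of $R_\Omega$ is controlled by $\|w_\Omega\|_{L^2}$ (since $R_\Omega$ is self-adjoint and compact, its norm equals $\sup_k \lambda_k(\Omega)^{-1} = \lambda_1(\Omega)^{-1}$, and Proposition \ref{poincare} bounds $\lambda_1(\Omega)^{-1}$); second, a Maz'ya-type comparison relating the difference of resolvents to the torsion functions. Concretely, I would start from the pointwise comparison $0 \le w_{\tilde\Omega} \le w_\Omega$, which follows from the weak maximum principle since $\tilde\Omega\subset\Omega$, and more generally from $0 \le R_{\tilde\Omega}(f) \le R_\Omega(f)$ for $f\ge 0$ together with $|R_\Omega(f)|\le R_\Omega(|f|)$.

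The key estimate I would aim for is the following: for every $f\in L^2(\R^N)$ with $\|f\|_{L^2}\le 1$, writing $u = R_\Omega(f)$, $\tilde u = R_{\tilde\Omega}(f)$, one has $u - \tilde u \in H^s_0(\Omega)$, and testing the equation for $u-\tilde u$ against itself gives
\[
[u-\tilde u]_{H^s(\R^N)}^2 = \int_{\R^N}\int_{\R^N} \frac{(u(x)-u(y)-\tilde u(x)+\tilde u(y))( (u-\tilde u)(x)-(u-\tilde u)(y))}{|x-y|^{N+2s}}\,dx\,dy = \int_{\Omega} f(u-\tilde u) - \int_{\tilde\Omega} f(u-\tilde u),
\]
where the last step uses the weak formulations of $u$ on $\Omega$ and of $\tilde u$ on $\tilde\Omega$ (note $u-\tilde u$ is an admissible test function for the first but not the second, so one must be careful: instead test the $u$-equation with $u-\tilde u$ and the $\tilde u$-equation with $\tilde u$ itself, or use that $(u-\tilde u)^+$ and $(u-\tilde u)^-$ are admissible on the appropriate sets). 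This reduces the problem to bounding $\int_{\Omega\setminus\tilde\Omega}|f|\,|u-\tilde u|$, and hence, via Hölder and Poincaré, to bounding $|\Omega\setminus\tilde\Omega|$ or, better, $\int_{\Omega\setminus\tilde\Omega} w_\Omega$. Since $0\le w_{\tilde\Omega}\le w_\Omega$ and $w_{\tilde\Omega}=0$ on $\Omega\setminus\tilde\Omega$, one has $\int_{\Omega\setminus\tilde\Omega} w_\Omega \le \int_{\R^N}(w_\Omega - w_{\tilde\Omega}) \le |\Omega|^{1/2}\|w_\Omega-w_{\tilde\Omega}\|_{L^2(\R^N)}$, which is the quantity appearing on the right-hand side of the claim.

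Assembling: $[u-\tilde u]_{H^s}^2 \lesssim (\text{some power of }\|w_\Omega-w_{\tilde\Omega}\|_{L^2})$, then Poincaré (Proposition \ref{poincare}, with constant depending only on $|\Omega|$ and $N$) upgrades this to a bound on $\|u-\tilde u\|_{L^2}$, and taking the supremum over $\|f\|_{L^2}\le 1$ yields $\|R_\Omega - R_{\tilde\Omega}\|_{\mathcal{L}(L^2)} \le C\|w_\Omega-w_{\tilde\Omega}\|_{L^2}^{\alpha}$ for an explicit $\alpha\in(0,1)$ (I expect $\alpha=1/2$ after one Hölder step, possibly smaller if an interpolation between $L^2$ and $L^\infty$ bounds on the torsion function — available by \cite[Theorem 3.1]{brascoparini} — is needed to control $\int_{\Omega\setminus\tilde\Omega}|u-\tilde u|$ more carefully). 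The main obstacle is the bookkeeping in the testing step: $u-\tilde u$ is not globally an admissible test function for the $\tilde\Omega$-problem, so one has to split into positive and negative parts (or insert $w_\Omega$ as a comparison barrier) to legitimately arrive at the integral over $\Omega\setminus\tilde\Omega$; once that is handled, the rest is Hölder, Poincaré, and the elementary comparison $\int_{\Omega\setminus\tilde\Omega}w_\Omega \le |\Omega|^{1/2}\|w_\Omega-w_{\tilde\Omega}\|_{L^2}$.
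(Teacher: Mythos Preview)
Your energy-testing route has a genuine gap precisely at the step you flag as the main obstacle. When the bookkeeping is done correctly you obtain
\[
[u-\tilde u]_{H^s(\R^N)}^2 \;=\; \int_\Omega f\,(u-\tilde u),
\]
\emph{not} $\int_{\Omega\setminus\tilde\Omega} f(u-\tilde u)$. Indeed, with $v:=u-\tilde u\in H^s_0(\Omega)$ and $\mathcal E$ the Gagliardo form, testing the $u$-equation with $v$ gives $\mathcal E(u,v)=\int_\Omega fv$, while $\mathcal E(\tilde u,v)=\mathcal E(u,\tilde u)-[\tilde u]^2=\int_\Omega f\tilde u-\int_{\tilde\Omega} f\tilde u=0$ (use $\tilde u\in H^s_0(\tilde\Omega)\subset H^s_0(\Omega)$ as test in the $u$-equation, and $\tilde u$ itself in the $\tilde u$-equation). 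Your displayed formula would require $\mathcal E(\tilde u,v)=\int_{\tilde\Omega}fv$, which is exactly the illegitimate testing. Splitting $v$ into $v^\pm$ does not rescue it: for $f\ge 0$ one has $v\ge 0$, so $v^-=0$ and $v^+=v\notin H^s_0(\tilde\Omega)$; for general $f$ neither part lies in $H^s_0(\tilde\Omega)$. The correct identity $[v]^2=\int_\Omega fv$ then only yields $\|v\|_{L^2}\le C\|f\|_{L^2}$ via Poincar\'e, with no smallness in terms of $\|w_\Omega-w_{\tilde\Omega}\|_{L^2}$. A second, independent issue: even granting the false identity, H\"older on $\int_{\Omega\setminus\tilde\Omega}|f|\,|u|$ produces $\|u\|_{L^2(\Omega\setminus\tilde\Omega)}$, and neither this nor $|\Omega\setminus\tilde\Omega|$ is controlled by $\int_{\Omega\setminus\tilde\Omega}w_\Omega$ or by $\|w_\Omega-w_{\tilde\Omega}\|_{L^2}$ for general $f\in L^2$.

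The paper bypasses the energy form entirely. The key is the self-adjointness identity $\int_\Omega R_\Omega(f)=\int_\Omega f\,w_\Omega$ (and its analogue on $\tilde\Omega$), which for $f\ge 0$ gives directly
\[
\|R_\Omega(f)-R_{\tilde\Omega}(f)\|_{L^1(\Omega)}=\int_\Omega f\,(w_\Omega-w_{\tilde\Omega})\le \|f\|_{L^2}\,\|w_\Omega-w_{\tilde\Omega}\|_{L^2}.
\]
This $L^1$ bound is combined with the $L^\infty$ regularity $\|R_\Omega(f)\|_{L^\infty}\le C\|f\|_{L^2}$ from \cite[Theorem 3.1]{brascoparini} via $\|g\|_{L^2}^2\le \|g\|_{L^\infty}\|g\|_{L^1}$, yielding $\alpha=\tfrac12$ when $N<4s$; for $N\ge 4s$ one runs the same argument in $L^p$ with $p$ large (where the $L^\infty$ bound is available), passes to $L^{p'}$ by self-adjointness, and concludes by Riesz--Thorin interpolation. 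Your ``comparison barrier'' hint is closer in spirit to this---since $T:=R_\Omega-R_{\tilde\Omega}$ is positivity-preserving one gets $|Tf|\le \|f\|_{L^\infty}(w_\Omega-w_{\tilde\Omega})$---but turning that $L^\infty\!\to L^2$ bound into an $L^2\!\to L^2$ bound still requires the $L^\infty$ regularity of the resolvent and an interpolation, not the energy computation you outline.
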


\begin{proof}
Let $0<s<1$ be fixed. Observe that if $u,v \in H^1_0(\Omega)$ are the unique solutions of $(-\Delta)^s u=f$ in $\Omega$,  $(-\Delta)^s v=1$ in $\Omega$, respectively, using $v$ and $u$ as test functions in the weak formulation of the two previous equations, respectively, we get
$$
	\int_{\R^N}\int_{\R^N} \frac{(u(x)-u(y))(v(x)-v(y))}{|x-y|^{N+2s}} \,dx\,dy = \int_\Omega fw = \int_\Omega u,
$$
that is, $\int_\Omega f w_\Omega = \int_\Omega R(f)$. The previous computation gives that
\begin{equation*} 
\int_\Omega R_\Omega(f)-R_{\tilde \Omega}(f) =  \int_\Omega f(w_\Omega-w_{\tilde \Omega}).
\end{equation*}

By \cite[Theorem 3.1]{brascoparini}, for $N<4s$ we have
\begin{equation} \label{reg}
	\|R_\Omega(f)\|_{L^\infty(\Omega)} \leq C(N,|\Omega|) \|f\|_{L^2(\Omega)},
\end{equation}
and then, by using \eqref{reg} and H\"older's inequality we get
\begin{align*}
\|R_\Omega(f)-R_{\tilde \Omega}(f)\|_{L^2(\Omega)}^2 &\leq  \|R_\Omega(f)-R_{\tilde \Omega}(f)\|_{L^\infty(\Omega)}  \|R_\Omega(f)-R_{\tilde \Omega}(f)\|_{L^1(\Omega)}\\
&\leq C \|f\|_{L^2(\Omega)} \|f(w_\Omega-w_{\tilde \Omega})\|_{L^1(\Omega)}\\ 
&\leq C \|f\|_{L^2(\Omega)}^2 \|w_\Omega-w_{\tilde \Omega}\|_{L^2(\Omega)}.
\end{align*}

The case  $N\geq 4s$ will follow by an interpolation argument. For that end, consider $p>2$, $N\geq 4s$ and $f\in L^p(\Omega)$, $f\geq 0$. By using again \cite[Theorem 3.1]{brascoparini} and H\"older's inequality we get
$$
\|R_\Omega(f)-R_{\tilde \Omega}(f)\|_{L^p(\Omega)}  \leq C \|f\|_{L^p(\Omega)} \|w_\Omega-w_{\tilde \Omega}\|^\frac1p_{L^{p'}(\Omega)}
$$
for a suitable constant $C$ depending only on $p$, $N$ and $|\Omega|$, that is, 
$$
\|R_\Omega -R_{\tilde \Omega} \|_{\mathcal{L}(L^p(\R^N))} \leq C   \|w_\Omega-w_{\tilde \Omega}\|^\frac1p_{L^{p'}(\Omega)}.
$$
Now, 
let  $R^*_\Omega$ and $R^*_{\tilde \Omega}$ be the adjoint operators  of $R_\Omega$ and $\Omega_{\tilde \Omega}$, respectively, which are defined from $L^{p'}(\Omega)$ in itself. Since the $L^{p'}$ norm of $R^*_\Omega-R^*_{\tilde \Omega}$ coincides with the $L^p$ norm of $R_\Omega-R_{\tilde \Omega}$, we get
$$
\|R^*_\Omega -R^*_{\tilde \Omega} \|_{\mathcal{L}(L^{p'}(\R^N))}  \leq C  \|w_\Omega-w_{\tilde \Omega}\|^\frac1p_{L^{p'}(\Omega)}.
$$
	Since $R_\Omega$ and $R_{\tilde \Omega}$ are self-adjoint on $L^2(\Omega)$, keeping the same notation for $R_A$, $R_{\tilde \Omega}$ and their extension on $L^{p'}(\Omega)$, we obtain that $R_\Omega - R_{\tilde \Omega}: L^{p'}(\Omega)  \to L^{p'}(\Omega)$ and 
$$
\|R_\Omega -R_{\tilde \Omega} \|_{\mathcal{L}(L^{p'}(\R^N))} \leq C   \|w_\Omega-w_{\tilde \Omega}\|^\frac1p_{L^{p'}(\Omega)}.
$$
Finally, from the Riesz-Thorin interpolation theorem and since $1<p'<2$, we obtain that
\begin{align*}
\|R_\Omega -R_{\tilde \Omega} \|_{\mathcal{L}(L^2(\R^N))} &\leq \|R_\Omega -R_{\tilde \Omega} \|_{\mathcal{L}(L^p(\R^N))}^\frac12 \|R_\Omega -R_{\tilde \Omega} \|_{\mathcal{L}(L^{p'}(\R^N))}^\frac12\\
&\leq C  \|w_\Omega-w_{\tilde \Omega}\|_{L^{p'}(\Omega)}^\frac1p\\
&\leq C  |\Omega|^\frac{2-p'}{p'p} \|w_\Omega-w_{\tilde \Omega}\|_{L^2(\Omega)}^\frac1p
\end{align*}
which ends the proof.

\end{proof}

\section{Proof of Theorem \ref{compactnessdichotomy}} \label{sec.compactnessdichotomy}

Let $\{\Omega_n\}_{n \in \N} \subset \mathcal{A}(\R^N)$ be a minimizing sequence for Problem \eqref{minimizingproblem}, satisfying $|\Omega_n|=c$ for every $n \in \N$, and $J(\Omega_n) \to m$ as $n \to +\infty$. By Theorem \ref{main}, we have two possible cases:
\begin{enumerate}
 \item[(i)] there exists a subsequence, still denoted by $\{\Omega_n\}_{n \in \N}$, and a set $\Omega \in \mathcal{A}(\R^N)$, such that, up to some translations, $\{\Omega_n\}_{n \in \N}$ weakly $\gamma$-converges to $\Omega$. Since $J$ is invariant by translations, the sequence will be again a minimizing sequence for $J$. By Proposition \ref{semicontinuityvolume}, $|\Omega|\leq c$. Let $\hat{\Omega} \in \mathcal{A}(\R^N)$ be such that $\Omega \subset \hat{\Omega}$ and $|\hat{\Omega}|=c$. Since $J$ is decreasing with respect to set inclusion, and by Propositions \ref{uniformconvergenceresolvent} and \ref{similartoBDM},
\[ m \leq J(\hat{\Omega}) \leq J(\Omega) \leq \liminf_{n \to +\infty} J(\Omega_n) = m.\]
Therefore, $\hat{\Omega}$ is a minimizing set.
 \item[(ii)] there exists a subsequence, still denoted by $\{\Omega_n\}_{n \in \N}$, such that we can define $\widetilde{\Omega}_n = \Omega_n^1 \cup \Omega_n^2 \subset \Omega_n$, where $\Omega_n^1$, $\Omega_n^2$ are such that $\text{dist}(\Omega_n^1,\Omega_n^2) \to +\infty$, $\liminf_{n \to +\infty} |\Omega_n^i| > 0$ for $i=1,2$, and $J(\widetilde{\Omega}_n) \to m$ as $n \to +\infty$. If $|\widetilde{\Omega}_n|<c$, it is possible to modify suitably the sequence in order to respect the volume constraint as well, since the functional $J$ is decreasing with respect to set inclusion.
\end{enumerate}

\end{document}